\newtheorem{theorem}{Theorem}
\newtheorem{lemma}[theorem]{Lemma}
\newtheorem{corollary}[theorem]{Corollary}
\newtheorem*{theorem*}{Theorem}
\newtheorem*{lemma*}{Lemma}
\newtheorem*{corollary*}{Corollary}
\newtheorem*{definition*}{Definition}
\newtheorem*{claim*}{Claim}
\newtheorem*{fact*}{Fact}
\newcommand{\ignore}[1]{}
\def\trace{{\bf Tr}}
\def\reals{{\mathbb R}}
\def\K{{\mathcal K}}
\def\U{{\mathcal U}}
\def\D{{\mathcal D}}
\def\oracle{{\mathcal O}}
\def\bold0{\mathbf{0}}
\newcommand\E{\mbox{\bf E}}
\def\x{\mathbf{x}}
\def\y{\mathbf{y}}
\def\xbar{\bar{\mathbf{x}}}
\def\trace{{\bf Tr}}
\newcommand{\eps}{\epsilon}
\newcommand{\abs}[1]{\left|#1\right|}
\newcommand{\norm}[1]{\left\|#1\right\|}
\newcommand{\lr}[1]{\left(#1\right)}
\newcommand{\set}[1]{\left\{#1\right\}}
\newcommand{\ceil}[1]{\lceil#1\rceil}
\newcommand{\tr}{^{\top}}
\newcommand{\proj}{\Pi}
\renewcommand{\t}[1]{\tilde{#1}}
\newcommand{\tsum}{\sum\nolimits}
\renewcommand{\O}{O}
\newcommand{\tO}{\t{\O}}
\DeclareMathOperator{\argmax}{\arg\max}
\newcommand{\LINEIF}[2]{%
    \STATE\algorithmicif\ {#1}\ \algorithmicthen\ {#2}%
}
\renewcommand{\x}{x}
\renewcommand{\xbar}{\bar{\x}}
\renewcommand{\y}{y}
\renewcommand{\u}{u}
\renewcommand{\v}{v}
\newcommand{\ub}{\bm{\u}}
\renewcommand{\U}{\mathcal{U}}
\newcommand{\Ub}{\bm\U}
\newcommand{\g}{\bm g}
\newcommand{\del}{\delta}
\newcommand{\sig}{\sigma}
\newcommand{\st}{*}
\title{Oracle-Based Robust Optimization via Online Learning}
\author{%
\makebox[0.3\linewidth]{Aharon Ben-Tal} \\
Technion \\
abental@ie.technion.ac.il 
\and
\makebox[0.3\linewidth]{Elad Hazan} \\
Technion \\
ehazan@ie.technion.ac.il
\and
\makebox[0.3\linewidth]{Tomer Koren} \\
Technion \\
tomerk@technion.ac.il
\and
\makebox[0.3\linewidth]{Shie Mannor} \\
Technion \\
shie@ee.technion.ac.il
}
\begin{document}

\maketitle

\begin{abstract}%
Robust optimization is a common framework in optimization under uncertainty when the problem parameters are not known, but it is rather known that the parameters belong to some given uncertainty set. In the robust optimization framework the problem solved is a min-max problem where a solution is judged according to its performance on the worst possible realization of the parameters. 
In many cases, a straightforward solution of the robust optimization problem of a certain type requires solving an optimization problem of a more complicated type, and in some cases  even NP-hard. For example, solving a  robust conic quadratic program, such as those arising in robust SVM,  ellipsoidal uncertainty leads in general to a semidefinite program. In this paper we develop a method for approximately solving a robust optimization problem using tools from online convex optimization, where in every stage a standard (non-robust) optimization program is solved. Our algorithms find an approximate robust solution using a number of calls to an oracle that solves the original (non-robust) problem that is inversely proportional to the square of the target accuracy.
\end{abstract}

\section{Introduction}

The Robust Optimization (RO; see \cite{Ben-TalN02,BENbook,BertsimasBC11}) framework addresses a fundamental problem of many convex optimization problems: slight inaccuracies in data give rise to significant fluctuations in the solution.  While there are different approaches to handle uncertainty in the parameters of an optimization problem, the RO approach  choose a solution that performs best against the worst possible parameter. When the objective function is convex in the parameters, and concave in the uncertainty, and when the uncertainty set is convex the overall optimization problem is convex. 

Despite its theoretical and empirical success, a significant hinderance of adopting RO to large scale problems is the increased computational complexity. In particular, robust counterpart of an optimization problem is often more difficult, albeit usually convex, mathematical problems. For example, the robust counterpart of conic quadratic programming with ellipsoidal uncertainty constraints becomes a semi-definite program, for which we currently have significantly slower solvers.

RO has recently gained traction as a tool for analyzing machine learning algorithms and for devising new ones. In a sequence of papers, Xu, Caramanis and Mannor  show that several standard machine learning algorithms such as Lasso and norm regularized support vector machines have a RO interpretation \citep{XuCaramanisMannorSVM09,XuCaramanisM10Robust}. Beyond these works, robustness is a desired property for many learning algorithms. Indeed, making standard algorithms robust to outliers or to perturbation in the data has been proposed in several works; 
see \cite{Lanckriet02,
Bhattacharyya04, Bhattacharyya04b,Shivaswamy06,Trafalis07}. However in these cases, the problem eventually solved is more complicated than the original problem. For example, in \cite{Trafalis07} the original problem is a standard support vector machine, but when robustifying it to input uncertainty, one has to solve a second-order conic program (in the non-separable case). Another example is \cite{Shivaswamy06} where the uncertainty is a probability distribution over inputs. In that case, the original SVM becomes a second-order conic program as well.


The following question arrises: {\bf can we (approximately) solve a robust counterpart of a given optimization problem using only an algorithm  for the original optimization formulation}?  
In this paper we answer this question on the affirmative: we give two meta-algorithms that receive as input an oracle to the original mathematical problem and approximates the robust counterpart by invoking the oracle a polynomial number of times.
In both approaches, the number of iterations to obtain an approximate robust solution is a function of the approximation guarantee and the complexity of the uncertainty set, and does not directly depend on the dimension of the problem. 
Our methods differ on the assumptions regarding the uncertainty set and the dependence of the constraints on the uncertainty. The first method allows any concave function of the noise terms but is limited to convex uncertainty sets. The second method allows arbitrary uncertainty sets as long as a ``pessimization oracle'' (as termed by \citealt{MutapcicB09}) exists--- an oracle that finds the worst case-noise for a given feasible solution. 
Our methods are formally described as template, or meta-algorithms, and are general  enough to be applied even if the robust counterpart is NP-hard%
\footnote{Recall that we are only providing an approximate solution, and thus our algorithms formally constitute a ``polynomial time approximation scheme'' (PTAS).}.%

Our approach for achieving efficient oracle-based RO is to reduce the robust formulation to a zero-sum game, which we solve by a primal-dual technique based on tools from online learning. Such primal-dual methods originated from the study of approximation algorithms for linear programs \citep{plotkin1995fast} and were recently proved invaluable in understanding Lagrangian relaxation methods (see e.g. \citealt{AHKsurvey}) and in sublinear-time optimization techniques~\citep{Clarkson12,hazan2011beating,DBLP:conf/nips/GarberH11}. 
We show how to apply this methodology to oracle-based RO. 
Along the way, we contribute some extensions to the existing online learning literature itself, 
notably giving a new Follow-the-Perturbed-Leader algorithm for regret minimization that works with (additive) approximate linear oracles. 

Finally, we demonstrate examples and applications of our methods to various RO formulation including linear, semidefinite and quadratic programs. The latter application builds on recently developed efficient linear-time algorithms for the trust region problem~\citep{hazan2014linear}.

\paragraph{Related work.}  Robust optimization is by now a field of study by itself, and the reader is referred to \cite{BENbook,BertsimasBC11} for further information and references. The computational bottleneck associated with robust optimization was addressed in several papers. 
 \cite{Calafiore_Campi_2004} propose to sample constraints from the uncertainty set, and obtain an ``almost-robust" solution with high probability with enough samples. The main problem with their approach is that the number of samples can become large for a high-dimensional problem. 

For certain types of discrete robust optimization problems, \cite{BertsimasS03} propose solving the robust version of the problem via $n$ (the dimension) solutions of the original problem. 
\cite{MutapcicB09} give an iterative cutting plane procedure for attaining essentially the same goal as us, and demonstrate impressive practical performance. However, the overall running time for their method can be exponential in the dimension. 


\paragraph{Oraganization.}

The rest of the paper is organized as follows. In Section \ref{sec:Prelim} we present the model and set the notations for the rest of the paper. In Section \ref{sub:main-ogd} we describe the simpler of our two meta-algorithms: a meta algorithm for approximately solving RO problems that employs dual subgradient steps, under the assumption that the robust problem is convex with respect to the noise variables.
In Section \ref{sub:main-fpl} we remove the latter convexity assumption and only assume that the problem of finding the worst-case noise assignment can be solved by invoking a ``pessimization oracle''.
This approach is more general than the subgradient-based method and we exhibit perhaps our strongest example of solving robust quadratic programs using this technique in Section~\ref{sec:examples}. 
Section \ref{sec:examples} also contains examples of application of our technique for robust linear and semi-definite programming. 
We conclude in Section~\ref{sec:conc}.

\section{Preliminaries} \label{sec:Prelim}

We start this section with the standard formulation of RO. We then recall some basic results from online learning. 

\subsection{Robust Optimization}

Consider a convex mathematical program in the following general formulation:
\begin{equation} \label{eq:original}
\begin{array}{ll}
	\text{minimize} &\quad f_0(\x) \\
	\text{subject to} &\quad f_i(\x,\u_{i}) \le 0,  
		\qquad i=1,\ldots,m ~, \\
	&\quad \x \in \D ~.
\end{array}
\end{equation}
Here $f_{0},f_{1},\ldots,f_{m}$ are convex functions, $\D \subseteq \reals^n$ is a convex set in Euclidean space, and $\ub = (\u_{1},\ldots,\u_{m})$ is a fixed parameter vector.
The {\it robust counterpart} of this formulation is given by
\begin{equation}
\begin{array}{ll}
	\text{minimize} & \quad f_0(\x) \\
	\text{subject to} & \quad f_i(\x,\u_{i}) \le 0,  
		\quad \forall ~\u_{i} \in \U, ~~ i=1,\ldots,m ~, \\
	&\quad \x \in \D ~,
\end{array}
\end{equation}
where the parameter vector $\ub$ is constrained to be in a set $\Ub = \U \times \cdots \times \U = \U^{m}$ called the \emph{uncertainty set}.
It is without loss of generality to assume that the uncertainty set has this specific form of a cartesian product%
, see e.g. \cite{Ben-TalN02}.
Here we also assume that the uncertainty set is symmetric (that is, its projection onto each dimension is the same set). This assumption is only made for simplifying notations and can be relaxed easily.

The following observation is standard: we can reduce the above formulation to a feasibility problem via a binary search over the optimal value of \eqref{eq:original}, replacing the objective with the constraint $f_0(\x) - t \le 0$ with $t$ being our current guess of the optimal value (of course, assuming the range of feasible values is known a-priory). 
For ease of notation, we rename $f_0$ by shifting it by $\alpha$, and can write the first constraint as simply $f_0(\x,\u) \leq 0$.  
With these observations, we can reduce 
the robust counterpart to the feasibility problem
\begin{align} \label{eq:robust}
	\exists? ~ \x \in \D ~~:~~~ 
	f_i(\x,\u_{i}) \le 0 ~, \quad \forall ~ \u_{i} \in \U, ~~ i=1,\ldots,m ~.
\end{align}
We say that $\x \in \D$ is an \emph{$\eps$-approximate solution} to this problem if $\x$ meets each constraint up to $\eps$, that is, if it satisfies $f_i(\x,\u_{i}) \le \eps$ for all $\u_{i} \in \U$ ($i=1,\ldots,m$).

\subsection{Online Convex Optimization and Regret minimization}

Our derivations below use tools from online learning, namely algorithms for minimizing regret in the general prediction framework of Online Convex Optimization (OCO). In OCO%
\footnote{Here we present OCO as the problem of online maximization of \emph{concave reward} functions rather than online minimization of \emph{convex cost} functions. While the latter is more common, both formulations are equivalent. }, 
the online predictor iteratively produces a decision $x_t \in \K$ from a convex decision set $\K \subseteq \reals^n$. After a decision is generated, a concave reward function $f_t$ is revealed, and the decision maker suffers a loss of~$f_t(x_t) \in \reals$. 
The standard performance metric in online learning is called \emph{regret}, given by
$$
	R_T ~=~ 
	\max_{x^\st \in \K} \sum_{t=1}^T f_t(x^*) ~-~ \sum_{t=1}^T f_t(x_t) ~.
$$ 
The reward function is not known to the decision maker before selecting $x_t$ and it is, in principal, arbitrary and even possibly chosen by an adversary. We henceforth make crucial use of this robustness against adversarial choice of reward functions: the reward functions we shall use will be chosen by a dual optimization problem, thereby directing the entire algorithm towards a correct solution. 
We refer the reader to \citep{CeLuBook06,Hsurvey10,ShalevSurvey} for more details on online learning and online convex optimization. 

Two regret-minimization algorithms that we shall use henceforth (at least in spirit) are Online Gradient Descent \citep{Zinkevich03} and Follow the Perturbed Leader \citep{KalVem05}. 

\paragraph{Online Gradient Descent (OGD).}

In OGD, the decision maker predicts according to the rule
\begin{align} \label{eqn:ogd}
	\x_{t+1} ~\gets~
	\proj_\K \big[ \x_t + \eta \nabla f_t(\x_t) \big] ~,
\end{align}
where $\proj_\K(\x) = \min_{\y \in \K} \|\x-\y\|$ is the Euclidean projection operator onto the set $\K$. 
Hence, the OGD algorithm takes a projected step in the direction of the gradient of the current reward function. Even thought the next reward function can be arbitrary, it can be shown that this algorithm achieves a sublinear regret.

\begin{lemma}[\citealt{Zinkevich03}]  \label{lem:zink}
For any sequence of reward functions, let $\{\x_t\}$ be the sequence generated by \eqref{eqn:ogd}. 
Then, setting $\eta = D/G\sqrt{T}$ we obtain
$$
	\max_{\x^* \in \K} \sum_t f_t(\x^*) ~-~ \sum_{t=1}^T f_t(\x_t) 
	~\le~ GD \sqrt{T} ~,
$$
where $G \geq \max_{t \in [T]} \|f_t(\x_t)\| $ is an upper bound on the $\ell_{2}$ norm of the gradients of the reward functions, and $D \ge \max_{\x,\y \in \K} \|\x-\y\|$ is an upper bound on the $\ell_{2}$ diameter of $\K$. 
\end{lemma}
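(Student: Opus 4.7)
The plan is to follow the standard potential-function argument for projected gradient methods, using $\|\x_t - \x^\st\|^2$ as the potential, where $\x^\st$ is any fixed comparator in $\K$.

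First I would invoke concavity of each $f_t$ to reduce the regret to a linearized form: since $f_t$ is concave, $f_t(\x^\st) - f_t(\x_t) \le \nabla f_t(\x_t)\tr(\x^\st - \x_t)$. Thus it suffices to upper-bound $\sum_t \nabla f_t(\x_t)\tr(\x^\st - \x_t)$.

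Next, using the update rule \eqref{eqn:ogd} and the non-expansiveness of the Euclidean projection onto the convex set $\K$ (i.e., $\|\proj_\K(\y) - \x^\st\| \le \|\y - \x^\st\|$ for $\x^\st \in \K$), I would expand
\[
\|\x_{t+1} - \x^\st\|^2 \le \|\x_t + \eta \nabla f_t(\x_t) - \x^\st\|^2 = \|\x_t - \x^\st\|^2 + 2\eta \nabla f_t(\x_t)\tr(\x_t - \x^\st) + \eta^2 \|\nabla f_t(\x_t)\|^2.
\]
Rearranging gives, per step,
\[
\nabla f_t(\x_t)\tr(\x^\st - \x_t) \le \frac{\|\x_t - \x^\st\|^2 - \|\x_{t+1} - \x^\st\|^2}{2\eta} + \frac{\eta}{2}\|\nabla f_t(\x_t)\|^2.
\]

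Finally I would sum over $t=1,\dots,T$, telescoping the first term to obtain at most $\|\x_1 - \x^\st\|^2/(2\eta) \le D^2/(2\eta)$, and bound the gradient-norm term by $\eta G^2 T/2$. Combining and taking the maximum over $\x^\st \in \K$ yields
\[
R_T \le \frac{D^2}{2\eta} + \frac{\eta G^2 T}{2},
\]
which is minimized at $\eta = D/(G\sqrt{T})$, giving the stated $GD\sqrt{T}$ bound. There is no real obstacle here: the argument is entirely self-contained once one has concavity and projection non-expansiveness, and the only mild care needed is to ensure the telescoping drops the nonnegative $-\|\x_{T+1} - \x^\st\|^2/(2\eta)$ term and to choose $\eta$ to balance the two contributions.
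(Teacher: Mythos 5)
Your proof is correct and is exactly the standard argument from the cited source \citep{Zinkevich03}: linearize via concavity, use non-expansiveness of the projection, telescope $\|\x_t-\x^\st\|^2$, and balance the two terms with $\eta = D/(G\sqrt{T})$ to get $GD\sqrt{T}$. The paper itself gives no proof (it simply cites Zinkevich), so there is nothing further to reconcile.
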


%
%
%
%

\paragraph{Follow the Perturbed Leader (FPL).}

The FPL algorithm works in a similar setting as OGD, but with two crucial differences:
\begin{enumerate}
\item
The set $\K$ does not need to be convex. This is a significant advantage of the FPL approach, which we make use of in our application to robust quadratic programming (see Section~\ref{sub:qp}). 
\item
FPL assumes that the reward functions $f_{1},\ldots,f_{T}$ are linear, i.e. $f_t(\x) = f_t \cdot \x$ with $f_t \in \reals^n$.
\end{enumerate} 
\cite{KalVem05} suggest the following method for online decision making that relies on a linear optimization procedure $M$ over the set $\K$ that computes $M(f) = \argmax_{x \in \K} f \cdot x$ for all $f \in \reals^{n}$.
FPL chooses $x_t$ by first drawing a perturbation $p_t \in [0,\frac{1}{\eta}]^n$ uniformly at random, and computing:
\begin{equation} \label{eqn:fpl}
	\x_{t+1} 
	~=~ M \lr{ \tsum_{\tau=1}^t f_\tau + p_t } ~.
\end{equation}
The regret of this algorithm is bounded as follows.
\begin{lemma}[\citealt{KalVem05}] \label{lem:kv}
For any sequence of reward vectors $f_{1},\ldots,f_{T}$, let $\x_{1},\ldots,x_{T}$ be the sequence of decisions generated by \eqref{eqn:fpl} with parameter $\eta = \sqrt{D/RAT}$.
Then
$$
	\max_{\x^* \in \K} \sum_{t=1}^{T} f_t \cdot \x^* ~-~
		\E\left[ \sum_{t=1}^T f_t \cdot \x_t \right]
	~\le~ 2\sqrt{DRAT} ~,
$$
where $R \geq \max_{t,x} |f_t \cdot \x| $ is an upper bound on the magnitude of the rewards, $A \ge \max_{t} \|f_t\|_1$ is an upper bound on the $\ell_1$ norm of the reward vectors, and $D \ge \max_{\x,\y \in \K} \|\x-\y\|_1$ is an upper bound on the $\ell_1$ diameter of $\K$. 
\end{lemma}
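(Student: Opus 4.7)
The plan is to follow the classical Kalai--Vempala analysis, decomposing the regret into (i) an idealized ``Be-the-Perturbed-Leader'' benchmark that is essentially regret-free apart from a single penalty for the perturbation, and (ii) a stability/coupling argument bounding the gap between FPL's play at time $t$ and the benchmark. Because the $p_t$ are i.i.d.\ uniform on $[0,1/\eta]^n$, it suffices to analyze regret under a single fixed perturbation $p$ drawn once at the start: the marginal law of each $x_t$ is identical in both settings, so the expected cumulative reward against any fixed $f_1,\ldots,f_T$ matches that of the fresh-perturbation version stated in the lemma.

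First I would prove a ``Be-the-Perturbed-Leader'' inequality. Define the one-step-ahead iterates $y_t := M\lr{\sum_{\tau=1}^t f_\tau + p}$; under the single-perturbation coupling, FPL plays $x_{t+1}=y_t$, so we may compare $x_t$ against the lookahead $y_t$. A standard ``Be the Leader'' induction applied to the augmented reward sequence $(p,f_1,\ldots,f_T)$ on $\K$ yields, for every $x^\st\in\K$,
$$
    p\cdot y_0 + \sum_{t=1}^T f_t\cdot y_t
    ~\ge~ p\cdot x^\st + \sum_{t=1}^T f_t\cdot x^\st .
$$
H\"older's inequality with $\|p\|_\infty\le 1/\eta$ and $\|y_0-x^\st\|_1\le D$ then gives $\sum_t f_t\cdot y_t \;\ge\; \sum_t f_t\cdot x^\st - D/\eta$.

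The second step, which is the technical heart of the proof, bounds $|\E[f_t\cdot(y_t-x_t)]|$. Writing $y_t = M\lr{\sum_{\tau=1}^{t-1} f_\tau + (p+f_t)}$ and applying the change of variable $p'=p+f_t$, the law of $y_t$ coincides with that of $M\lr{\sum_{\tau=1}^{t-1} f_\tau + p'}$ with $p'$ uniform on the translated cube $[0,1/\eta]^n+f_t$, while $x_t=M\lr{\sum_{\tau=1}^{t-1} f_\tau + p}$ uses $p$ uniform on the original cube. Both laws are uniform on congruent cubes of volume $(1/\eta)^n$, so their total-variation distance equals one minus their fractional overlap, namely $1-\prod_i(1-\eta|f_t^{(i)}|)\le \eta\|f_t\|_1\le \eta A$ (valid when $\eta\|f_t\|_\infty\le 1$). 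Since $|f_t\cdot x|\le R$ uniformly on $\K$, a coupling/TV bound gives $|\E[f_t\cdot y_t]-\E[f_t\cdot x_t]| \le O(R\eta A)$, contributing an $O(\eta RAT)$ aggregate loss.

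Combining the two pieces, $\E[\sum_t f_t\cdot x_t] \;\ge\; \sum_t f_t\cdot x^\st - D/\eta - O(\eta RAT)$, and choosing $\eta=\sqrt{D/(RAT)}$ equates the two error terms to give the claimed $2\sqrt{DRAT}$ bound (the implicit constant in $O(\cdot)$ is tightened to exactly $2$ by a careful TV accounting). The step I would scrutinize most carefully is the stability estimate: the coordinate-wise inequality $\prod_i(1-\eta|f_t^{(i)}|)\ge 1-\eta\|f_t\|_1$ requires $\eta|f_t^{(i)}|\le 1$ for every $i$, and if this fails the shifted and unshifted cubes may be disjoint and the TV bound is vacuous---one then has to fall back on the trivial per-round bound $|f_t\cdot(y_t-x_t)|\le 2R$. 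Fortunately, at the optimal $\eta$ the good regime holds for $T$ beyond a modest threshold, and the final bound closes cleanly.
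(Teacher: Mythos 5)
Your proof is correct and is essentially the standard Kalai--Vempala argument (single-perturbation coupling, be-the-perturbed-leader induction with a $D/\eta$ H\"older penalty, and the cube-overlap stability bound of $\eta A$ per round), which is exactly the route the paper itself takes: it cites \citet{KalVem05} for this lemma and reproduces the same decomposition in Section~\ref{sub:afpl} when proving the approximate-oracle variant (Theorem~\ref{thm:afpl}), which recovers this statement at $\eps=0$.
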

For our purposes, and in order to be able to work with an approximate optimization oracle to the original mathematical program, we need to adapt the original FPL algorithm to work with noisy oracles. This adaptation is made precise in Section \ref{sub:main-fpl}.


\section{Oracle-Based Robust Optimization} \label{sec:main}

In this section we formally state and prove our first (and simpler) result: an    oracle-based approximate robust optimization algorithm that is based on subgradient descent. 


Throughout the section we assume the availability of an optimization oracle for the original optimization problem of the form given in Figure~\ref{fig:oracle}, which we denote by $\oracle_{\eps}$.
Such an optimization oracle approximately solves formulation \eqref{eq:robust} for any \emph{fixed} noise $\ub \in \Ub$, in the sense that it either returns an $\eps$-feasible solution $\x$ (that meets each constraint up to $\eps$) or correctly declares that the problem is infeasible.

\begin{figure}[h]
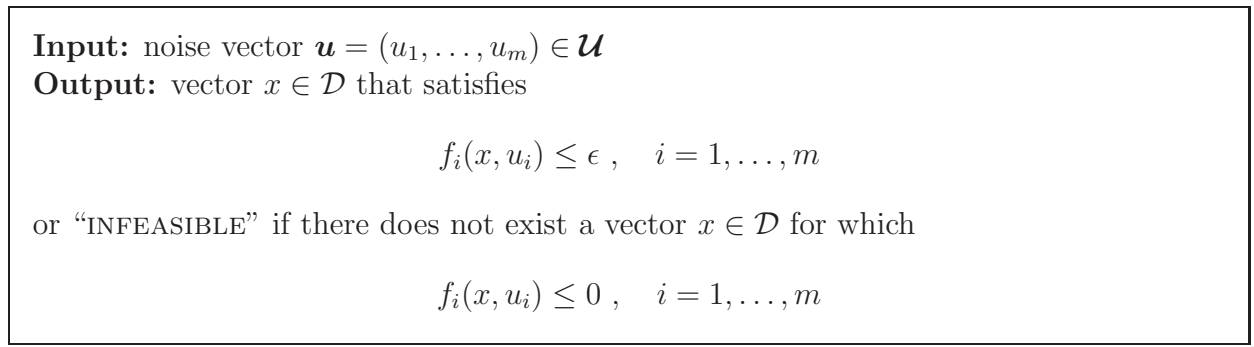

\begin{framed} \noindent
{\bf Input:} noise vector $\ub = (\u_{1},\ldots,\u_{m}) \in \Ub$ \\
{\bf Output:} vector $x \in \D$ that satisfies
\begin{align*}
	f_i(\x,\u_{i}) \le \eps ~, \quad i=1,\ldots,m
\end{align*}
or ``{\sc infeasible}'' if there does not exist a vector $\x \in \D$ for which
\begin{align*}
	f_i(\x,\u_{i}) \le 0 ~, \quad i=1,\ldots,m
\end{align*}
\vskip -2ex
\end{framed}
\vskip -2ex
\caption{An $\eps$-approximate optimization oracle for the original optimization problem.} \label{fig:oracle}
\end{figure}


\subsection{Dual-Subgradient Meta-Algorithm} \label{sub:main-ogd}

In this section we assume that for all $i=1,\ldots,m$:
\begin{enumerate} 
\item
For all $\x \in \D$, the function $f_{i}(\x,\u)$ is \emph{concave} in $\u$;
\item
The set $\U$ is \emph{convex}.
\end{enumerate}
Under these assumptions, the robust formulation is in fact a convex-concave saddle-point problem that can be solved in polynomial time using interior-point methods. 
However, recall that our goal is to solve the robust problem by invoking a solver of the original (non-robust) optimization problem.

In the setting of this section, we shall make use of the following definitions.
Let $D$ be an upper bound over the $\ell_{2}$ diameter of $\U$, that is $D \ge \max_{\u,\v \in \U} \norm{\u - \v}_{2}$.
Let $G$ be a constant such that $\norm{\nabla_{\u}f_{i}(\x,\u)}_{2} \le G$ for all $\x \in \D$ and $\u \in \U$.

\begin{algorithm}
\caption{\sc Dual-Subgradient Robust Optimization} \label{alg:ogd}
\begin{algorithmic}
\STATE {\bf input:~} target accuracy $\eps > 0$, parameters $D, G$
\STATE {\bf output:~} $2\eps$-approximate solution to \eqref{eq:robust}, or ``{\sc infeasible}'' \\[1ex]
\STATE set $T = \ceil{G^{2}D^{2}/\eps^{2}}$ and $\eta = D/G\sqrt{T}$
\STATE initialize $(\u_{1}^{0},\ldots,\u_{m}^{0}) \in \Ub$ arbitrarily
\FOR{$t = 1$ to $T$}
	\FOR{$i = 1$ to $m$}
		\STATE update
		$
		\u_{i}^{t} \gets
		\proj_{\U} \big[\,
			\u_{i}^{t-1} + \eta \nabla_{\u}f_{i}(x^{t-1},u_{i}^{t-1})
		\,\big]
		$
	\ENDFOR
	\STATE set $\x^{t} \gets \oracle_{\eps}(\u^{t}_{1},\ldots,\u^{t}_{m})$
	\LINEIF {oracle declared infeasibility}{\algorithmicreturn~``{\sc infeasible}''}
\ENDFOR
\RETURN $\xbar =  \frac{1}{T} \sum_{t=1}^T \x^{t}$
\end{algorithmic}
\end{algorithm}

With the above assumptions and definitions, we can now present an oracle-based robust optimization algorithm, given in Algorithm~\ref{alg:ogd}. 
The algorithm is comprised of primal-dual iterations, where the dual part of the algorithm updates the noise terms according to the current primal solution, via a low-regret update.
For this algorithm, we prove:

\begin{theorem} \label{thm:main-ogd}
Algorithm \ref{alg:ogd} either returns an $2\eps$-approximate solution to the robust program \eqref{eq:robust} or correctly concludes that it is infeasible. 
The algorithm terminates after at most $\O(G^{2}D^{2}/\eps^2)$ calls to the oracle $\oracle_{\eps}$.
\end{theorem}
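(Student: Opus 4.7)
The proof naturally splits into two cases depending on whether the oracle ever declares infeasibility.

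\textbf{Infeasibility case.} If $\oracle_\eps$ returns ``{\sc infeasible}'' at some iteration $t$, then by the specification in Figure~\ref{fig:oracle} there is no $\x \in \D$ satisfying $f_i(\x, \u_i^t) \le 0$ for all $i$. But the robust problem \eqref{eq:robust} requires satisfying these constraints for \emph{every} $\u_i \in \U$, including the particular choice $\u_i^t$. So the robust problem is also infeasible, and the algorithm's output is correct.

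\textbf{Feasible case.} Now suppose the oracle always succeeds, producing $\x^t$ with $f_i(\x^t, \u_i^t) \le \eps$ for every $t$ and every $i$. I want to bound $f_i(\bar \x, \u_i^*)$ for arbitrary $\u_i^* \in \U$. The plan has three steps. First, by convexity of $f_i(\cdot,\u_i^*)$ and Jensen's inequality,
\begin{equation*}
	f_i(\bar \x, \u_i^*) ~\le~ \frac{1}{T} \sum_{t=1}^T f_i(\x^t, \u_i^*).
\end{equation*}
Second, observe that for each fixed $i$ the inner loop on $\u_i$ is exactly an instance of OGD played on the convex set $\U$ against the sequence of concave reward functions $g^t_i(\u) \equiv f_i(\x^{t-1}, \u)$ (we are doing ascent on a concave reward, which is the OCO setting of Lemma~\ref{lem:zink}). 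By assumption the gradients $\nabla_\u f_i$ are bounded in norm by $G$ and the diameter of $\U$ is at most $D$, so with $\eta = D/G\sqrt T$ the regret bound yields, for every $\u_i^* \in \U$,
\begin{equation*}
	\sum_{t=1}^T f_i(\x^t, \u_i^*) ~-~ \sum_{t=1}^T f_i(\x^t, \u_i^t) ~\le~ GD\sqrt{T}.
\end{equation*}
(A minor subtlety: the OGD update at step $t$ uses the gradient at $\x^{t-1}$ and is applied \emph{before} $\x^t$ is chosen by the oracle, so we are indeed in the standard ``play first, then observe'' OCO protocol.) Third, using the oracle guarantee $f_i(\x^t, \u_i^t) \le \eps$ for every $t$ in the above, combined with the Jensen step, gives
\begin{equation*}
	f_i(\bar \x, \u_i^*) ~\le~ \eps + \frac{GD}{\sqrt T}.
\end{equation*}

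Finally, plugging in the choice $T = \lceil G^2 D^2/\eps^2 \rceil$ makes the second term at most $\eps$, so $f_i(\bar\x, \u_i^*) \le 2\eps$ for every $i$ and every $\u_i^* \in \U$, certifying that $\bar\x$ is a $2\eps$-approximate solution. The oracle call count is $T = O(G^2 D^2/\eps^2)$, as claimed. The main conceptual point --- and the only step that required any care --- is the clean decoupling in the second step: even though the noise iterates $\u_i^t$ feed back into the primal via the oracle, from the perspective of the OGD analysis the sequence $\{\x^t\}$ can be treated as an arbitrary adversarial sequence of points at which we linearize, so Lemma~\ref{lem:zink} applies verbatim without any need to reason about the oracle's behavior.
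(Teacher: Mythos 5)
Your proof is correct and follows essentially the same route as the paper's: the infeasibility case via the oracle's guarantee, the per-iteration bound $f_i(\x^t,\u_i^t)\le\eps$, the OGD regret bound of Lemma~\ref{lem:zink} applied per constraint with the $\x^t$ treated as an adaptive adversarial sequence, and convexity of $f_i$ in $\x$ (Jensen) to pass to $\xbar$. The only difference is cosmetic ordering of these steps, and your remark about the ``play first, then observe'' timing of the $\u_i^t$ update is a correct clarification of a point the paper leaves implicit.
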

\begin{proof}
First, suppose that the algorithm returns ``{\sc infeasible}''. 
By the definition of the oracle $\oracle_{\eps}$, this happens if for some $t \in [T]$, there does not exists $\x \in \reals^{n}$ such that
$$
	f_i(\x,\u^{t}_{i}) \le 0 ~, \quad i=1,\ldots,m ~.
$$
This implies that the robust counterpart \eqref{eq:robust} cannot be feasible, as there exists an admissible perturbation that makes the original problem infeasible.

Next, suppose that a solution $\xbar$ is returned.
The premise of the oracle implies that $f_i(\x^{t} , \u^{t}_{i}) \le \eps$ for all $t \in [T]$ and $i \in [m]$ (otherwise, the algorithm would have returned ``{\sc infeasible}''), whence
\begin{align} \label{eqn:ubound}
	\forall ~ i \in [m]~, \qquad
	\frac{1}{T} \sum_{t=1}^T f_i(\x^{t} , \u^{t}_{i}) 
	~\le~ \eps ~.
\end{align}
On the other hand, from the regret guarantee of the Online Gradient Descent algorithm we have 
\begin{align} \label{eqn:lbound}
	\forall ~ i \in [m]~, \qquad
	\max_{\u_{i} \in \U} \frac{1}{T} \sum_{t=1}^{T} f_i(\x^{t},\u_{i}) 
		~-~ \frac{1}{T} \sum_{t=1}^T f_i(\x^{t},\u^{t}_{i})
	~\le~ \frac{GD}{\sqrt{T}} 
	~\le~ \eps ~.
\end{align}
Combining \eqref{eqn:ubound} and \eqref{eqn:lbound}, we conclude that for all $i \in [m]$,
\begin{align*}
	\eps
	~\ge~ \frac{1}{T} \sum_{t=1}^T f_i(\x^{t} , \u^{t}_{i}) 
	~\ge~ \max_{\u_{i} \in \U}  \frac{1}{T} \sum_{t=1}^T  f_i(\x^{t} , \u_{i}) - \eps
	~\ge~ \max_{\u_{i} \in \U} f_i(\xbar , \u_{i}) - \eps ~,
\end{align*}
where the final inequality follows from the convexity of the functions $f_{i}$ with respect to $\x$.
Hence, for every $i \in [m]$ we have 
$$
	f_i(\xbar , \u_{i}) ~\le~ 2\eps ,
	\qquad
	\forall ~ \u_{i} \in \U ~,
$$ 
implying that $\xbar$ is an $2\eps$-approximate robust solution. 
\end{proof}

\subsection{Dual-Perturbation Meta-Algorithm} \label{sub:main-fpl}

We now give our more general and intricate oracle-based approximation algorithm for RO. In contrast to the previous simple subgradient-based method, in this section we do not need the uncertainty structure to be convex. 
Instead, in addition to an oracle to solve the original mathematical program, we also assume the existence of an efficient ``pessimization oracle'' (as termed by \citealt{MutapcicB09}), namely an oracle that approximates the worst-case noise for any given feasible solution $\x$. 
Formally, assume that for all $i=1,2,\ldots,m$ the following hold:
\begin{enumerate}
\item
For all $\x \in \D$, the function $f_{i}(\x,\u)$ is \emph{linear} in $\u$, i.e.~can be written as $f_{i}(\x,\u) = \g_{i}(\x) \cdot \u + h_{i}(\x)$ for some functions $\g_{i} : \reals^{n} \mapsto \reals^{d}$ and $h_{i} : \reals^{n} \mapsto \reals$;
\item
There exists a linear optimization procedure $M_{\eps}$ that given a vector $g \in \reals^{d}$, computes a vector $M_{\eps}(g) \in \reals^{d}$ such that 
$
	g \cdot M_{\eps}(g)
	\ge \argmax_{\u \in \U}  g \cdot \u - \eps ~.
$
\end{enumerate}
On the surface, the linearity assumption seems very strong. 
However, note that \emph{we do not assume the convexity of the set $\U$}.
This means that the dual subproblem (that amounts to finding the worst-case noise for a given $\x$) is not necessarily a convex program.
Nevertheless, our approach can still approximate the robust formulation as long as a procedure $M_{\eps}$ is available.

In the rest of the section we use the following notations.
Let $D$ be an upper bound over the $\ell_{1}$ diameter of $\U$, that is $D \ge \max_{\u,\v \in \U} \norm{\u - \v}_{1}$.
Let $F$ and $G$ be constants such that $| f_{i}(\x,\u) | \le F$ and $\norm{\g_{i}(\x)}_{1} \le G$ for all $\x \in \D$ and $\u \in \U$.

We can now present our second oracle-based meta-algorithm, described in Algorithm~\ref{alg:fpl}. 
Similarly to our dual-subgradient method, the algorithm is based on primal-dual iterations.
However, in the dual part we now rely on the approximate pessimization oracle for updating the noise terms. 
This algorithm provides the following convergence guarantee.

\begin{algorithm}
\caption{\sc Dual-Perturbation Robust Optimization} \label{alg:fpl}
\begin{algorithmic}
\STATE {\bf input:~} target accuracy and confidence $\eps, \del > 0$, parameters $D, F, G$
\STATE {\bf output:~} $4\eps$-approximate solution to \eqref{eq:robust}, or ``{\sc infeasible}'' \\[1ex]
\STATE set 
$
	T = \ceil{
		\max\set{DG,F} \cdot \frac{16F}{\eps^{2}} \log\frac{m}{\del}
	}
$
and $\eta = \sqrt{D/FGT}$
\FOR{$t = 1$ to $T$}
	\FOR{$i = 1$ to $m$}
		\STATE choose $p_t \in [0,\frac{1}{\eta}]^n$ uniformly at random
		\STATE compute
		$
			\u_{i}^{t} \gets M_{\eps}
			\big( \,
				\sum_{\tau=1}^{t} \g_{i}(\x^{\tau}) + p_{t}
			\, \big)
		$
	\ENDFOR
	\STATE set $\x^{t} \gets \oracle_{\eps}(\u^{t}_{1},\ldots,\u^{t}_{m})$
	\LINEIF {oracle declared infeasibility}{\algorithmicreturn~``{\sc infeasible}''}
\ENDFOR
\RETURN $\xbar =  \frac{1}{T} \sum_{t=1}^T \x^{t}$
\end{algorithmic}
\end{algorithm}

\begin{theorem} \label{thm:main-fpl}
With probability at least $1-\del$, Algorithm~\ref{alg:fpl} either returns an $4\eps$-approximate solution to the robust program \eqref{eq:robust} or correctly concludes that it is infeasible. 
The algorithm terminates after at most $T = \tO((DG+F)\cdot F/\eps^{2})$ calls to the oracle $\oracle_{\eps}$.
\end{theorem}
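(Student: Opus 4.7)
The plan is to follow the primal-dual template of Theorem~\ref{thm:main-ogd}, replacing the OGD regret bound for the dual player by a high-probability regret bound for Follow-the-Perturbed-Leader driven by the approximate linear oracle $M_\eps$. The infeasibility branch is immediate: if at some round $t$ the oracle $\oracle_{\eps}$ declares infeasibility on input $(\u_1^t,\ldots,\u_m^t)\in\Ub$, then no $\x\in\D$ satisfies $f_i(\x,\u_i^t)\le 0$ for all $i$; since $(\u_1^t,\ldots,\u_m^t)$ is an admissible realization of the uncertainty, \eqref{eq:robust} is itself infeasible.

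Suppose instead the algorithm returns $\xbar=\tfrac{1}{T}\sum_t \x^t$. By the premise of $\oracle_{\eps}$,
\begin{align*}
\forall\,i\in[m],\qquad \frac{1}{T}\sum_{t=1}^T f_i(\x^t,\u_i^t)\;\le\;\eps .
\end{align*}
For fixed $i$, the sequence $\u_i^1,\ldots,\u_i^T$ is produced by FPL on the linear rewards $\u\mapsto \g_i(\x^\tau)\cdot\u$ using the approximate oracle $M_\eps$. The goal is to prove that, with probability at least $1-\del/m$,
\begin{align*}
\max_{\u_i\in\U}\frac{1}{T}\sum_{t=1}^T f_i(\x^t,\u_i)\;-\;\frac{1}{T}\sum_{t=1}^T f_i(\x^t,\u_i^t)\;\le\;2\eps ,
\end{align*}
which is equivalent to the corresponding bound with $\g_i(\x^t)\cdot\u$ in place of $f_i(\x^t,\u)$, since the $h_i(\x^t)$ terms cancel inside the difference. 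Combining the two inequalities, invoking the convexity of $f_i(\cdot,\u_i)$ in $\x$ (so that $f_i(\xbar,\u_i)\le\tfrac{1}{T}\sum_t f_i(\x^t,\u_i)$), and union-bounding over $i\in[m]$ yields $f_i(\xbar,\u_i)\le 4\eps$ for every $\u_i\in\U$ and every $i$ simultaneously.

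The technical core is an extension of Lemma~\ref{lem:kv} to an $\eps$-approximate linear oracle. Rerunning the be-the-leader / stability argument of \cite{KalVem05} while charging the $\eps$ slack per round should yield, in expectation,
\begin{align*}
\E[R_T]\;\le\;2\sqrt{DFGT}\;+\;\eps T ,
\end{align*}
with $D$, $F$, $G$ the bounds defined in this section. The choice $\eta=\sqrt{D/FGT}$ together with $T=\tO((DG+F)F/\eps^{2})$ makes $\sqrt{DFG/T}\le\eps$, so the expected average regret is at most $2\eps$. To move from expectation to a high-probability statement, apply Azuma--Hoeffding to the martingale differences $\g_i(\x^t)\cdot\u_i^t-\E[\g_i(\x^t)\cdot\u_i^t\mid\mathcal{F}_{t-1}]$, where $\mathcal{F}_{t-1}$ is the $\sigma$-algebra generated by all previous perturbations $p_\tau$ and oracle outputs; each difference is bounded by $2F$, and the $\log(m/\del)$ factor built into $T$ drives the per-round deviation below $\eps$ with failure probability at most $\del/m$ per constraint.

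The main obstacle is twofold. First, one must verify that the standard FPL stability analysis degrades gracefully when every $\argmax$ is replaced by an additively $\eps$-suboptimal response: the be-the-leader potential must remain monotone up to an $\eps$ additive slack per step, so that the usual $\sqrt{DFGT}$ bound is preserved and only the clean $\eps T$ penalty is added; this is precisely the FPL extension flagged in Section~\ref{sec:Prelim}. Second, the primal iterates $\x^t$ (and hence the reward vectors $\g_i(\x^t)$ supplied to the dual players) depend on the history of the perturbations and on the internal randomness of $\oracle_{\eps}$, so the martingale for concentration must be defined against the joint filtration generated by both oracles. Care is needed to avoid circularity: the FPL regret inequality is stated against an adversarial reward sequence, and we must ensure that conditioning on $\mathcal{F}_{t-1}$ interacts correctly with the independently drawn perturbation $p_t$ used inside round $t$.
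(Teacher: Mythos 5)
Your proposal follows the paper's proof essentially verbatim: the same primal--dual decomposition (oracle feasibility bound plus dual regret bound plus convexity of $f_i$ in $\x$ plus a union bound over $i$), the same technical core (an approximate-oracle extension of FPL via the be-the-leader argument, i.e.\ Theorem~\ref{thm:afpl}), and the same handling of the adaptively generated rewards via a martingale/Azuma argument over the perturbation filtration, which the paper formalizes by invoking Lemma~4.1 of \cite{CeLuBook06} inside Lemma~\ref{lem:azuma}. The only discrepancy is quantitative and harmless: the careful accounting charges $2\eps$ per round for the approximate oracle rather than $\eps$, so the high-probability average regret is $3\eps$ rather than $2\eps$, which still yields the claimed $4\eps$-approximation.
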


We begin by analyzing the dual part of the algorithm, namely, the rule by which the variables $\u_{i}^{t}$ are updated.
While this rule is essentially an FPL-like update, we cannot apply Lemma~\ref{lem:kv} directly for two crucial reasons.
First, the update uses an approximate linear optimization procedure instead of an exact one as required by FPL. 
Second, the reward vectors $\g_{i}(x^{1}),\ldots,\g_{i}(x^{T})$ being observed by the dual algorithm are random variables that depend on its internal randomization (i.e., on the random variables $p_{1},\ldots,p_{T}$).
Nevertheless, by analyzing a noisy version of the FPL algorithm (in Section~\ref{sub:afpl} below) we can prove the following bound.

\begin{lemma} \label{lem:azuma}
For each $i=1,\ldots,m$, with probability at least $1-\del$ we have that
\begin{align*}
	\max_{\u_{i} \in \U} \sum_{t=1}^{T} \g_i(\x^{t}) \cdot \u_{i}
		~-~  \sum_{t=1}^T \g_i(\x^{t}) \cdot \u^{t}_{i}
	~\le~ 2\sqrt{DFGT} +  2F\sqrt{T \log\tfrac{1}{\del}} + 2\eps T ~.
\end{align*}
\end{lemma}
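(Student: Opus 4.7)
My plan is to view the dual updates of Algorithm~\ref{alg:fpl} (for each fixed $i$) as a perturbation-based online-learning procedure applied to the sequence of linear rewards $\g_i(\x^1),\ldots,\g_i(\x^T)$, and to reduce the claim to a regret bound in the spirit of Lemma~\ref{lem:kv}. Two features distinguish this application from the oblivious setting covered by that lemma: the linear optimization procedure $M_\eps$ is only additively $\eps$-accurate, and the rewards $\g_i(\x^t)$ are themselves random variables produced by the same algorithm, depending on the perturbations driving the dual updates. The three terms on the right-hand side of the lemma correspond, respectively, to the standard FPL rate, the cost of martingale concentration, and the cost of the approximate oracle.

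The first step will be to establish (as the main content of Section~\ref{sub:afpl}) an in-expectation regret bound of the form
\begin{align*}
\max_{\u_i\in\U}\sum_{t=1}^{T}\g_i(\x^t)\cdot\u_i \;-\; \E\Bigl[\,\sum_{t=1}^{T}\g_i(\x^t)\cdot\u_i^{t}\,\Bigr] \;\le\; 2\sqrt{DFGT}+2\eps T .
\end{align*}
To obtain this I would follow the standard Be-The-Perturbed-Leader template: a Be-The-Leader telescoping compares the algorithm to the perturbed leader at time $T+1$, and a per-round stability estimate controls how much this leader can move between consecutive rounds. Replacing the exact argmax by the additive-$\eps$ oracle $M_\eps$ loses at most $\eps$ per round in each of these two pieces, contributing $2\eps T$ in total; balancing the perturbation penalty $D/\eta$ against the per-round stability $\eta FG$ via $\eta=\sqrt{D/(FGT)}$ then recovers the $2\sqrt{DFGT}$ term. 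Since the analysis is pathwise once the comparator $\u_i\in\U$ is fixed, it remains valid when $\g_i(\x^t)$ adaptively depends on earlier perturbations.

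The second step converts this expected bound to a high-probability statement via Azuma--Hoeffding. Let $\mathcal{F}_t$ denote the $\sigma$-algebra generated by all perturbations used by the dual updates through round $t$, so that both $\u_i^t$ and $\x^t$ are $\mathcal{F}_t$-measurable. Since $|\g_i(\x^t)\cdot\u_i^t|\le F$, the centered sequence $Z_t=\g_i(\x^t)\cdot\u_i^t-\E[\g_i(\x^t)\cdot\u_i^t\mid\mathcal{F}_{t-1}]$ is a bounded martingale difference with $|Z_t|\le 2F$, and Azuma--Hoeffding gives $\sum_t Z_t\ge -2F\sqrt{T\log(1/\del)}$ with probability at least $1-\del$. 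Combining this tail bound with the in-expectation regret bound of step one yields the stated inequality.

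The main obstacle I expect is the first step: carefully propagating the additive $\eps$-error of $M_\eps$ through both the Be-The-Leader telescoping and the per-round stability estimate without accumulating any higher-order terms, and doing so in a way that remains agnostic to the adaptive dependence of $\g_i(\x^t)$ on past perturbations. This is a careful (if essentially mechanical) rewriting of the Kalai--Vempala argument, and it is the content deferred to Section~\ref{sub:afpl}.
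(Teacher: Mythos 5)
Your proposal follows essentially the same route as the paper: an in-expectation regret bound for the approximate-FPL dual updates (the content of Section~\ref{sub:afpl}) combined with Azuma's inequality applied to the bounded martingale differences $Z_t=\g_i(\x^t)\cdot\u_i^t-\E[\g_i(\x^t)\cdot\u_i^t\mid\mathcal{F}_{t-1}]$, $|Z_t|\le 2F$. The one refinement the paper makes explicit is that the first-step bound must be stated against the \emph{conditional} expectations $\E_{t}[\g_i(\x^t)\cdot\u_i^t]$, pathwise in the realized rewards (obtained by applying Lemma~4.1 of Cesa-Bianchi and Lugosi on top of the oblivious-adversary bound of Theorem~\ref{thm:afpl}), rather than the unconditional expectation you wrote, since only that version chains correctly with the Azuma step.
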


\begin{proof}
Fix some $i \in [m]$.
Note that the distribution from which the dual algorithm draws $\u_{i}^{t}$ is a deterministic function of the primal variables $x^{1},\ldots,x^{t-1}$.
Hence, we can apply Lemma~4.1 of \cite{CeLuBook06}, together with the regret bound of Theorem~\ref{thm:afpl} (see Section~\ref{sub:afpl} below), and obtain that
\begin{align} \label{eq:gi1}
	\max_{\u_{i} \in \U} \sum_{t=1}^{T} \g_i(\x^{t}) \cdot \u_{i}
		~-~  \sum_{t=1}^T \E_{t}[ \g_i(\x^{t}) \cdot \u^{t}_{i} ]
	~\le~ 2\sqrt{DFGT} + 2\eps T ~,
\end{align}
where $\E_{t}[ \cdot ]$ denotes the expectation conditioned on $p_{1},\ldots,p_{t-1}$.
Next, note that the random variables $Z_{t} = \g_i(\x^{t}) \cdot \u^{t}_{i} - \E_{t}[ \g_i(\x^{t}) \cdot \u^{t}_{i} ]$ for $t=1,\ldots,T$ form a martingale differences sequence with respect to $p_{1},\ldots,p_{T}$, and
\begin{align*}
	\abs{Z_{t}} 
	~\le~ \abs{\g_i(\x^{t}) \cdot \u^{t}_{i}} + \E_{t}[ \abs{\g_i(\x^{t}) \cdot \u^{t}_{i}} ]
	~\le~ 2F ~.
\end{align*}
Hence, by Azuma's inequality (see e.g., Lemma~A.7 in \citealt{CeLuBook06}),
with probability at least $1-\del$,
\begin{align} \label{eq:gi2}
	\sum_{t=1}^{T} \E_{t}[ \g_i(\x^{t}) \cdot \u^{t}_{i} ] 
		~-~ \sum_{t=1}^{T} \g_i(\x^{t}) \cdot \u^{t}_{i} 
	~\le~ 2F \sqrt{T \log\tfrac{1}{\del}} ~.
\end{align}
Summing inequalities \eqref{eq:gi1} and \eqref{eq:gi2}, we obtain the lemma.
\end{proof}

Equipped with the above lemma, we can now prove Theorem~\ref{thm:main-fpl}.

\begin{proof}[Proof of Theorem~\ref{thm:main-fpl}]
First, suppose that the algorithm returns ``{\sc infeasible}''. 
By the definition of the oracle $\oracle_{\eps}$, this happens if for some $t \in [T]$, there does not exists $\x \in \reals^{n}$ such that
$$
	f_i(\x,\u^{t}_{i}) \le 0 ~, \quad i=1,\ldots,m ~.
$$
This implies that the robust counterpart \eqref{eq:robust} cannot be feasible. 

Next, suppose that a solution $\xbar$ is returned (note that $\xbar$ must lie in the set $\D$ as we assume that $\D$ is convex).
This ensures that $f_i(\x^{t} , \u^{t}_{i}) \le \eps$ for all $t \in [T]$ and $i \in [m]$ (otherwise, the algorithm would have returned ``{\sc infeasible}''), whence
\begin{align} \label{eqn:ubound}
	\forall ~ i \in [m]~, \qquad
	\frac{1}{T} \sum_{t=1}^T f_i(\x^{t} , \u^{t}_{i}) 
	~\le~ \eps ~.
\end{align}
On the other hand, Lemma~\ref{lem:azuma} implies that for each $i \in [m]$ we have
\begin{align*} 
	\max_{\u_{i} \in \U} \frac{1}{T} \sum_{t=1}^{T} \g_i(\x^{t}) \cdot \u_{i}
		~-~  \frac{1}{T} \sum_{t=1}^T \g_i(\x^{t}) \cdot \u^{t}_{i}
	~\le~ 2\sqrt{\frac{DFG}{T}} +  2F\sqrt{\frac{\log\frac{m}{\del}}{T}} + 2\eps
\end{align*}
with probability at least $1-\del/m$.
Recalling that $f_{i}(x^{t},u) = \g_i(\x^{t}) \cdot \u$ for all $\u \in \U$ and applying a union bound, we obtain that with probability at least $1-\del$,
\begin{align}
	\forall ~ i \in [m], \quad
	\max_{\u_{i} \in \U} \frac{1}{T} \sum_{t=1}^{T} f_i(\x^{t} , \u_{i}) 
		~-~  \frac{1}{T} \sum_{t=1}^T f_i(\x^{t} , \u^{t}_{i})
	~\le~ 2\sqrt{\frac{DFG}{T}} +  2F\sqrt{\frac{\log\frac{m}{\del}}{T}} + 2\eps ~.
\end{align} 
Using our choice of $T$ now gives that with probability at least $1-\del$,
\begin{align} \label{eqn:lbound}
	\forall ~ i \in [m], \quad
	\max_{\u_{i} \in \U} \frac{1}{T} \sum_{t=1}^{T} f_i(\x^{t} , \u_{i}) 
		~-~  \frac{1}{T} \sum_{t=1}^T f_i(\x^{t} , \u^{t}_{i})
	~\le~ 3\eps ~.
\end{align}
Combining \eqref{eqn:ubound} and \eqref{eqn:lbound}, we conclude that with probability at least $1-\del$, for all $i \in [m]$,
\begin{align*}
	\eps
	~\ge~ \frac{1}{T} \sum_{t=1}^T f_i(\x^{t} , \u^{t}_{i}) 
	~\ge~ \max_{\u_{i} \in \U}  \frac{1}{T} \sum_{t=1}^T  f_i(\x^{t} , \u_{i}) - 3\eps
	~\ge~ \max_{\u_{i} \in \U} f_i(\xbar , \u_{i}) - 3\eps ~,
\end{align*}
where the final inequality follows from the convexity of the functions $f_{i}$ with respect to $\x$.
Hence, with probability at least $1-\del$, for every $i \in [m]$ we have 
$$
	f_i(\xbar , \u_{i}) ~\le~ 4\eps ,
	\qquad
	\forall ~ \u_{i} \in \U ~,
$$ 
implying that $\xbar$ is an $4\eps$-approximate robust solution. 
\end{proof}

\subsection{Follow the Approximate Perturbed Leader} \label{sub:afpl}

As mentioned above, in our analysis we require a noisy version of the FPL algorithm, namely a variant capable of using an approximate linear optimization procedure over the decision domain rather than an exact one. 
Here we analyze such a variant and prove Theorem~\ref{thm:afpl} being used in the proof of Lemma~\ref{lem:azuma} above.

Assume we have a procedure $M_{\eps}$ for $\eps$-approximating linear programs over a (not necessarily convex) domain $\K$, that is, for all $f \in \reals^{n}$ the output of $M_{\eps}(f)$ satisfies
$$
	f \cdot M_{\eps}(f)
	~\ge~ \max_{x \in \K} f \cdot x ~-~ \eps
$$
for some constant $\eps > 0$.
We analyze the following version of the FPL algorithm: at round $t$ choose $x_{t}$ by first choosing a perturbation $p_t \in [0,1/\eta]^n$ uniformly at random, and computing:
\begin{align} \label{eqn:fpl-approx}
	\x_{t+1} 
	~=~ M_{\eps} \lr{ \tsum_{\tau=1}^{t} f_{t} + p_t } ~.
\end{align}
We show that the error introduced by the noisy optimization procedure $M_{\eps}$ does not harm the regret too much.
Formally, we prove:

\begin{theorem} \label{thm:afpl}
For any sequence of reward vectors $f_{1},\ldots,f_{T}$, let $x_{1},\ldots,x_{T}$ be the sequence of decisions produced by \eqref{eqn:fpl-approx} with parameter $\eta = \sqrt{D/RAT}$.
Then
\begin{align*}
	\max_{\x^* \in \K} \sum_{t=1}^{T} f_t \cdot \x^*
		~-~ \E\left[ \sum_{t=1}^T f_t \cdot \x_t \right] 
	~\le~ 2\sqrt{DRAT} + 2\eps T ~,
\end{align*}
where $R \geq \max_{t,x} |f_t \cdot \x| $ is an upper bound on the magnitude of the rewards, $A \ge \max_{t} \|f_t\|_1$ is an upper bound on the $\ell_1$ norm of the reward vectors, and $D \ge \max_{\x,\y \in \K} \|\x-\y\|_1$ is an upper bound on the $\ell_1$ diameter of $\K$. 
\end{theorem}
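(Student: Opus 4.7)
The plan is to mimic the classical analysis of FPL by \cite{KalVem05}, tracking the additive error introduced at each invocation of the approximate oracle $M_\eps$ and showing it aggregates into a benign $O(\eps T)$ penalty. The analysis has two ingredients: an \emph{approximate Be-the-Leader (BTL)} inequality comparing a hypothetical one-step-lookahead sequence to the best fixed decision in hindsight, and a \emph{stability} estimate comparing the actual FPL decisions to that lookahead sequence.

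First I would establish an approximate BTL lemma. Define the lookahead decision $y_t = M_\eps(p_t + \sum_{\tau=1}^{t} f_\tau)$ (which differs from the algorithm's play $\x_{t+1}$ only in bookkeeping of indices). The defining property of $M_\eps$ yields, for every $z \in \K$,
\begin{align*}
	\Bigl(p_t + \tsum_{\tau=1}^{t} f_\tau\Bigr) \cdot y_t
	~\ge~ \Bigl(p_t + \tsum_{\tau=1}^{t} f_\tau\Bigr) \cdot z ~-~ \eps .
\end{align*}
A telescoping induction on $t$ (identical to the standard BTL induction, but picking up one $\eps$ per step when invoking the approximation guarantee) gives
\begin{align*}
	\sum_{t=1}^T f_t \cdot y_t
	~\ge~ \max_{\x^* \in \K} \sum_{t=1}^T f_t \cdot \x^* ~-~ \|p_t\|_\infty D ~-~ \eps T ,
\end{align*}
where the $\|p_t\|_\infty D$ term comes from the boundary effect of the initial/final perturbations bounded via $|p \cdot (\x-\y)| \le \|p\|_\infty \|\x-\y\|_1$.

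Second, I would control the stability gap $\E\bigl[\sum_{t} f_t \cdot (y_t - \x_t)\bigr]$ by the standard Kalai–Vempala coupling argument on the perturbation distribution: the laws of $\x_t$ and $y_t$ differ only by a translation of the perturbation by $f_t$, and since $p_t$ is uniform on $[0,1/\eta]^n$ with $\|f_t\|_\infty \le \|f_t\|_1 \le A$, the total variation distance between the two perturbation laws is at most $\eta A$. Combined with $|f_t \cdot \x| \le R$, this bounds the per-round stability loss by $\eta A R$. The point worth emphasizing is that this coupling argument depends only on the perturbation's Lebesgue density and the $\ell_1$ norm of $f_t$; it is \emph{oblivious} to which oracle (exact or $\eps$-approximate) is used to extract the arg-max, so it carries over unchanged to the noisy setting.

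Finally I would combine the two bounds. Taking expectations, $\E\|p_t\|_\infty \le 1/\eta$, so the cumulative expected regret is at most $D/\eta + \eta A R T + O(\eps T)$. Balancing the first two terms via $\eta = \sqrt{D/(RAT)}$ yields the promised $2\sqrt{DRAT} + 2\eps T$. The main conceptual step is the approximate-BTL induction; the main obstacle will be disciplined bookkeeping to ensure that the approximation slack enters only additively as $O(\eps T)$ and does not interact with the $\eta$-dependent perturbation/stability terms, so that the classical optimization of $\eta$ is unaffected.
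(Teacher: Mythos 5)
Your proposal is correct and follows essentially the same route as the paper: an approximate be-the-leader induction (absorbing one $\eps$ per oracle call), a H\"older bound of the form $\|p\|_\infty D \le D/\eta$ for the perturbation's boundary effect, the Kalai--Vempala overlap/coupling argument for stability (which, as you note, is indeed oblivious to whether the arg-max is exact or $\eps$-approximate), and the usual balancing $\eta = \sqrt{D/RAT}$. The only difference is cosmetic bookkeeping of the $\eps$-terms (the paper tracks them as $2\eps T$ across two lemmas), which does not affect the final bound.
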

Throughout this section we use the notation $f_{1:t}$ as a shorthand for the sum $\sum_{\tau=1}^{t} f_{\tau}$.
Following the analysis of \cite{KalVem05}, we first prove that being the approximate leader yields approximately zero regret. 
\begin{lemma} \label{lem:btl}
For any sequence of vectors $f_{1},\ldots,f_{T}$,
\begin{align*}
	\sum_{t=1}^{T} M_{\eps} (f_{1:t}) \cdot f_{t}
	~\ge~ M_{\eps} (f_{1:T}) \cdot f_{1:T} - \eps T ~.
\end{align*}
\end{lemma}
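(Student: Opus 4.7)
My plan is to prove Lemma~\ref{lem:btl} by induction on $T$, mimicking the classical ``Be-the-Leader'' argument of \cite{KalVem05} but tracking the $\eps$ slack introduced by the approximate linear oracle $M_{\eps}$ at each step.

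For the base case $T=1$, both sides equal $M_{\eps}(f_{1}) \cdot f_{1}$ up to the $-\eps$ slack, so the inequality is immediate. For the inductive step, I assume the statement for $T-1$:
\begin{align*}
	\sum_{t=1}^{T-1} M_{\eps}(f_{1:t}) \cdot f_{t}
	~\ge~ M_{\eps}(f_{1:T-1}) \cdot f_{1:T-1} - \eps(T-1) ~,
\end{align*}
add $M_{\eps}(f_{1:T}) \cdot f_{T}$ to both sides, and then bound $M_{\eps}(f_{1:T-1}) \cdot f_{1:T-1}$ from below in a way that lets me recombine the right-hand side into $M_{\eps}(f_{1:T}) \cdot f_{1:T}$.

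The key estimate is that, since $M_{\eps}(f_{1:T-1})$ approximately maximizes $f_{1:T-1} \cdot x$ over $\K$ up to additive error $\eps$, we have
\begin{align*}
	M_{\eps}(f_{1:T-1}) \cdot f_{1:T-1}
	~\ge~ \max_{x \in \K} f_{1:T-1} \cdot x - \eps
	~\ge~ M_{\eps}(f_{1:T}) \cdot f_{1:T-1} - \eps ~.
\end{align*}
Plugging this into the inductive hypothesis and using $f_{1:T-1} + f_{T} = f_{1:T}$ gives
\begin{align*}
	\sum_{t=1}^{T} M_{\eps}(f_{1:t}) \cdot f_{t}
	~&\ge~ M_{\eps}(f_{1:T}) \cdot f_{1:T-1} - \eps - \eps(T-1) + M_{\eps}(f_{1:T}) \cdot f_{T} \\
	~&=~ M_{\eps}(f_{1:T}) \cdot f_{1:T} - \eps T ~,
\end{align*}
completing the induction.

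There is no real obstacle here; the only subtle point is recognizing where the $\eps$ slack enters (precisely once per induction step, when comparing the ``previous'' approximate leader to the ``current'' one against the cumulative vector $f_{1:T-1}$), which immediately accounts for the total $\eps T$ loss relative to the exact Be-the-Leader lemma. No convexity of $\K$ is used, which is consistent with the FPL setting we need later.
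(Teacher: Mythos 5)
Your proof is correct and follows essentially the same route as the paper: induction on $T$, with the single $\eps$ loss per step coming from comparing the approximate leader at time $T-1$ to the one at time $T$ against the cumulative vector $f_{1:T-1}$ via the oracle guarantee. The only (shared, implicit) requirement is that $M_{\eps}$ returns a point of $\K$, so that $\max_{x\in\K} f_{1:T-1}\cdot x \ge M_{\eps}(f_{1:T})\cdot f_{1:T-1}$; with that, your argument matches the paper's.
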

\begin{proof}
The proof is by induction on $T$.
For $T=1$ the claim is trivial.
Next, assuming correctness for some value of $T$ we have
\begin{align*}
	\tsum_{t=1}^{T+1} M_{\eps} (f_{1:t}) \cdot f_{t}
	&~\ge~ M_{\eps} (f_{1:T}) \cdot f_{1:T} + M_{\eps} (f_{1:T+1}) \cdot f_{T+1} - \eps T \\
	&~\ge~ M_{\eps} (f_{1:T+1}) \cdot f_{1:T} - \eps + M_{\eps} (f_{1:T+1}) \cdot f_{T+1} - \eps T \\
	&~=~ M_{\eps} (f_{1:T+1}) \cdot f_{1:T+1} - \eps (T+1) ~,
\end{align*}
which completes the proof.
\end{proof}
Next, we bound the regret of a hypothetical algorithm that on round $t$ uses the unobserved function $f_{t}$ for predicting $x_{t}$.
\begin{lemma}
For any vectors $f_{1},\ldots,f_{T}$ ($T \ge 2$) and $p \in [0,1/\eta]^{n}$ it holds that
\begin{align*}
	\sum_{t=1}^{T} M_{\eps}(f_{1:t} + p) \cdot f_{t}
	~\ge~ \max_{x \in \K} f_{1:t} \cdot x - \frac{D}{\eta} - 2\eps T ~,
\end{align*}
where $D \ge \max_{\x,\y \in \K} \|\x-\y\|_1$ is an upper bound on the $\ell_1$ diameter of $\K$.
\end{lemma}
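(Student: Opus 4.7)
The plan is to adapt the standard ``be-the-leader'' (BTL) argument of \cite{KalVem05} to accommodate the approximate oracle $M_\eps$, using the previous lemma (Lemma~\ref{lem:btl}) as a plug-in replacement for exact BTL. The key trick is to treat the perturbation $p$ as a fictitious $0$-th reward vector and apply Lemma~\ref{lem:btl} to the augmented sequence $f_0, f_1, \ldots, f_T$ with $f_0 := p$. Since $f_{0:t} = p + f_{1:t}$ for every $t \ge 0$, this immediately gives
\begin{align*}
    M_\eps(p) \cdot p \;+\; \sum_{t=1}^{T} M_\eps(p + f_{1:t}) \cdot f_t
    \;\ge\; M_\eps(p + f_{1:T}) \cdot (p + f_{1:T}) \;-\; \eps(T+1) .
\end{align*}

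Next I would use the $\eps$-approximation guarantee of $M_\eps$ on the vector $p + f_{1:T}$. Letting $x^\star \in \argmax_{x \in \K} f_{1:T} \cdot x$, this gives
\begin{align*}
    M_\eps(p + f_{1:T}) \cdot (p + f_{1:T})
    \;\ge\; \max_{x \in \K}\bigl[(p+f_{1:T})\cdot x\bigr] - \eps
    \;\ge\; p \cdot x^\star + f_{1:T} \cdot x^\star - \eps .
\end{align*}
Substituting back yields
\begin{align*}
    \sum_{t=1}^{T} M_\eps(p + f_{1:t}) \cdot f_t
    \;\ge\; f_{1:T} \cdot x^\star
    \;+\; \bigl(p \cdot x^\star - M_\eps(p) \cdot p\bigr)
    \;-\; \eps(T+2) .
\end{align*}

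The remaining step is to bound the bracketed term $p \cdot x^\star - M_\eps(p) \cdot p$ from below by $-D/\eta$. Since both $x^\star$ and $M_\eps(p)$ lie in $\K$, H\"older's inequality gives
\begin{align*}
    p \cdot x^\star - M_\eps(p) \cdot p
    \;\ge\; -\|p\|_\infty \cdot \|x^\star - M_\eps(p)\|_1
    \;\ge\; -\tfrac{1}{\eta}\cdot D ,
\end{align*}
using $\|p\|_\infty \le 1/\eta$ and the $\ell_1$ diameter bound. Combining the last two displays and finally noting that $\eps(T+2) \le 2\eps T$ for $T \ge 2$ (which is exactly why the lemma's hypothesis requires $T \ge 2$) yields the desired inequality.

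The main subtlety—not so much an obstacle as a bookkeeping point—is ensuring that the approximation errors introduced at two different places (the $T$ uses of $M_\eps$ inside Lemma~\ref{lem:btl} and the single use on $p + f_{1:T}$) combine into the claimed $2\eps T$ slack; this is what forces the condition $T \ge 2$. All other steps are the standard FPL ``hallucinate a $0$-th round'' reduction to BTL, with H\"older providing the diameter/perturbation trade-off $D/\eta$ exactly as in the exact-oracle analysis.
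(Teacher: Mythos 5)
Your proof is correct and follows essentially the same route as the paper: hallucinate a fictitious round $0$ with reward $p$, apply Lemma~\ref{lem:btl} to the augmented sequence, and use H\"older's inequality with $\norm{p}_{\infty} \le 1/\eta$ and the $\ell_{1}$-diameter $D$ to absorb the perturbation term, with $T \ge 2$ absorbing the slack into $2\eps T$. The only (harmless) difference is that you compare $M_{\eps}(p+f_{1:T}) \cdot (p+f_{1:T})$ directly against the exact maximizer $x^{\star}$ of $f_{1:T}$, whereas the paper compares against $M_{\eps}(f_{1:T})$ and invokes the oracle guarantee a second time; your accounting $(T+2)\eps \le 2\eps T$ is in fact slightly tighter than the paper's.
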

\begin{proof}
Imagine a fictitious round $t=0$ in which a reward vector $f_{0} = p$ is observed. 
Then, using Lemma~\ref{lem:btl} we can write 
\begin{align*}
	\sum_{t=1}^{T} M_{\eps}(f_{1:t} + p) \cdot f_{t}
	&~=~ \sum_{t=0}^{T} M_{\eps}(f_{0:t}) \cdot f_{t} - M_{\eps}(f_{0}) \cdot f_{0} \\
	&~\ge~ M_{\eps}(f_{0:t}) \cdot f_{0:t} - M_{\eps}(f_{0}) \cdot f_{0} - \eps T ~.
\end{align*}
Using the guarantee of $M_{\eps}$, we can bound the first term on the right hand side as 
\begin{align*}
	M_{\eps}(f_{0:t}) \cdot f_{0:t}
	&~\ge~ M_{\eps}(f_{1:t}) \cdot f_{0:t} - \eps \\
	&~=~ M_{\eps}(f_{1:t}) \cdot f_{1:t} + M_{\eps}(f_{1:t}) \cdot f_{0} - \eps \\
	&~\ge~ \max_{x \in \K} f_{1:t} \cdot x + M_{\eps}(f_{1:t}) \cdot f_{0} - 2\eps ~.
\end{align*}
Putting things together, for $T \ge 2$ we have
\begin{align*}
	\sum_{t=1}^{T} M_{\eps}(f_{1:t} + p) \cdot f_{t}
	&~\ge~ \max_{x \in \K} f_{1:t} \cdot x + ( M_{\eps}(f_{1:t}) - M_{\eps}(f_{0}) ) \cdot f_{0} - (T+2)\eps \\
	&~\ge~ \max_{x \in \K} f_{1:t} \cdot x - \frac{D}{\eta} - 2\eps T ~,
\end{align*}
where the final inequality follows from H\"{o}lder's inequality, since $\norm{ M_{\eps}(f_{1:t}) - M_{\eps}(f_{0}) }_{1} \le D$ and $\norm{f_{0}}_{\infty} = \norm{p}_{\infty} \le 1/\eta$.
\end{proof}
Our final lemma bounds the expected difference in quality between the prediction made by the hypothetical algorithm to the one made by the approximate FPL algorithm.
\begin{lemma}
For any sequence $f_{1},\ldots,f_{t}$ of reward vectors and $p$ distributed uniformly in the cube $[0,1/\eta]^{n}$ we have
\begin{align*}
	\E[ M_{\eps}( f_{1:t-1} + p ) \cdot f_{t} ] ~-~ \E[ M_{\eps}( f_{1:t} + p ) \cdot f_{t} ]
	~\ge~ -\eta RA ~,
\end{align*}
where $R \geq \max_{t,x} |f_t \cdot \x| $ and $A \ge \max_{t} \|f_t\|_1$. 
\end{lemma}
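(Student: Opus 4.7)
The plan is to prove this by a change of variables and a volume comparison (the standard FPL coupling argument, adapted to work with the approximate oracle $M_{\eps}$).

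The key observation is that $M_{\eps}(f_{1:t} + p) = M_{\eps}(f_{1:t-1} + (p + f_t))$, so if we substitute $q = p + f_t$ in the second expectation, we can write
\begin{align*}
    \E[M_{\eps}(f_{1:t} + p) \cdot f_t]
    ~=~ \E_{q \sim \mathrm{unif}(C + f_t)}[M_{\eps}(f_{1:t-1} + q) \cdot f_t],
\end{align*}
where $C = [0, 1/\eta]^n$. Define $g(q) = M_{\eps}(f_{1:t-1} + q) \cdot f_t$, which satisfies $|g(q)| \le R$ by definition of $R$. The quantity we need to lower bound is then $\E_P[g] - \E_Q[g]$, where $P$ and $Q$ are the uniform distributions on $C$ and on the shifted cube $C + f_t$, respectively.

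First I would note that $P$ and $Q$ have the same density $\eta^n$ on their supports, so when we expand the difference as integrals, the contributions on the intersection $C \cap (C + f_t)$ cancel exactly, leaving
\begin{align*}
    \E_P[g] - \E_Q[g]
    ~=~ \eta^n \int_{C \setminus (C+f_t)} g(p)\,dp ~-~ \eta^n \int_{(C+f_t) \setminus C} g(q)\,dq.
\end{align*}
The two asymmetric differences have equal volume, call it $V$, and using $|g| \le R$ the entire expression is bounded below by $-2R \cdot \eta^n V$.

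Next, I would bound the volume $V$. A direct coordinate-wise computation shows that the intersection factorizes as
\begin{align*}
    \mathrm{vol}(C \cap (C+f_t)) ~=~ \prod_{i=1}^n \max\!\lr{0,\, \tfrac{1}{\eta} - |f_t[i]|},
\end{align*}
and assuming $\eta \|f_t\|_\infty \le 1$ (which is without loss of generality in the relevant regime, otherwise the bound is trivial) this equals $\eta^{-n} \prod_i (1 - \eta|f_t[i]|)$. Hence $\eta^n V = 1 - \prod_i (1 - \eta |f_t[i]|) \le \eta \sum_i |f_t[i]| = \eta \|f_t\|_1 \le \eta A$ by Bernoulli's inequality (or a direct union-bound-style expansion). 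Combining this with the previous display yields the claimed bound (up to a universal constant absorbed into the statement; the factor can be tightened or accommodated in the regret analysis using $R$ in place of $2R$).

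The only subtle point is the change-of-variables step, which must use the linearity of $M_{\eps}$'s input in $p$; the approximation error of $M_{\eps}$ does not enter here because we are not comparing against an exact oracle, only using $M_{\eps}$'s output consistently on both sides. The main obstacle in writing this cleanly is book-keeping the boundary case when $\|f_t\|_\infty > 1/\eta$, but since the parameters are set so that $\eta A \le 1$, this case contributes at most $2R \le \eta R A + (\text{slack})$ and is easily absorbed.
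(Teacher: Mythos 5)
Your proof is correct in substance and follows essentially the same route as the paper: the paper's own proof simply cites Lemma~3.2 of \cite{KalVem05} for the fact that the cubes $f_{1:t-1}+p$ and $f_{1:t}+p$ overlap in at least a $1-\eta\|f_t\|_1 \ge 1-\eta A$ fraction, notes that $M_{\eps}(f_{1:t-1}+p)\cdot f_t$ and $M_{\eps}(f_{1:t}+p)\cdot f_t$ coincide on that overlap, and bounds the contribution off the overlap---exactly the change-of-variables and volume computation you carry out explicitly (and, as you correctly observe, the approximation error of $M_{\eps}$ plays no role since the same oracle appears on both sides). The only discrepancy is your factor of $2$ (you obtain $-2\eta RA$ rather than $-\eta RA$, since with possibly negative rewards the two values can a priori differ by $2R$ off the overlap); note that the paper's one-line proof asserts the difference is ``at most $R$'' and thus glosses over the very same point, and in either case the constant only propagates harmlessly into the final regret bound of the approximate-FPL theorem.
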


\begin{proof}
Lemma 3.2 in \cite{KalVem05} shows that the cubes $f_{1:t-1} + p$ and $f_{1:t} + p$ overlap in at least $1-\eta |f_{t}| \ge 1-\eta A$ fraction. 
On this intersection, the random variables $M_{\eps}( f_{1:t-1} + p ) \cdot f_{t}$ and $M_{\eps}( f_{1:t} + p ) \cdot f_{t}$ are identical. Otherwise, they can differ by at most $R$.
This gives the claim.
\end{proof}
We can now prove our regret bound.
\begin{proof}[Proof of Theorem~\ref{thm:afpl}]
Since we are bounding the expected regret, we can simply assume that $p_{1} = \ldots = p_{T} = p$ with $p$ uniformly distributed in the cube $[0,1/\eta]^{n}$.
Combining the above lemmas, we see that
\begin{align*}
	\E\left[ \sum_{t=1}^{T} M_{\eps}(f_{1:t-1} + p_{t}) \cdot f_{t} \right]
	&~=~ \E\left[ \sum_{t=1}^{T} M_{\eps}(f_{1:t-1} + p) \cdot f_{t} \right] \\
	&~\ge~ \E\left[ \sum_{t=1}^{T} M_{\eps}(f_{1:t} + p) \cdot f_{t} \right] - \eta RAT \\
	&~\ge~ \max_{x \in \K} f_{1:t} \cdot x - \frac{D}{\eta} - \eta RAT - 2\eps T ~.
\end{align*}
The claimed regret bound now follows from our choice of $\eta = \sqrt{D/RAT}$.
\end{proof}

\section{Examples and Applications} \label{sec:examples}

In this section we provide several examples for the applicability of our results. 
All the problems we consider are stated as feasibility problems.  
For concreteness, we focus on ellipsoidal uncertainty sets, being the most common model of data uncertainty.

\subsection{Robust Linear Programming} \label{sub:lp}

A linear program (LP) in the standard form is given by
\begin{align*}
	\exists? &\quad \x \in \reals^{n} \nonumber\\
	\text{s.t.} &\quad a_{i} \tr \x - b_{i} \le 0 ~, 
	\quad i=1,\ldots,m ~,
\end{align*}
The robust counterpart of this optimization problem is a second-order conic program (SOCP) that can be solved efficiently, see e.g. \cite{BENbook,BertsimasBC11}. 
In many cases of interest there exist highly efficient solvers for the original LP problem, as in the important case of network flow problems where the special combinatorial structure allows for algorithms that are much more efficient than generic LP solvers.  However, this combinatorial structure is lost for its corresponding robust network flow problem. 
Hence, solving the robust problem using an oracle-based approach might be favorable in these cases.
For the same reason, our technique is relevant even in the case of polyhedral uncertainty, where the robust counterpart remains an LP but possibly without the special structure of the original formulation.

In the discussion below, we assume that the feasible domain of the LP is inscribed in the Euclidean unit ball (this can be ensured via standard scaling techniques). Notice this also implies that the feasible domain of the corresponding robust formulation is inscribed in the same ball.

A robust linear program with ellipsoidal noise is given by:
\begin{align} \label{eq:rlp}
	\exists? &\quad \x \in \reals^{n} \nonumber\\
	\text{s.t.} &\quad (a_{i} + P \u_{i}) \tr \x - b_{i} \le 0 ~, 
	\quad \forall ~ \u_{i} \in \U, \quad i=1,\ldots,m ~,
\end{align}
where $P \in \reals^{n \times K}$ is a matrix controlling the shape of the ellipsoidal uncertainty, $a_{i} \in \reals^{n}$ are the \emph{nominal} parameter vectors, and $\U = \{ \u \in \reals^{K} ~:~ \norm{\u}_{2} \le 1 \}$ is the $K$-dimensional Euclidean unit ball.

\paragraph{Dual-Subgradient Algorithm.}
The robust linear program \eqref{eq:rlp} is amenable to our OGD-based meta-algorithm (Algorithm~\ref{alg:ogd}), as the constraints are linear with respect to the noise terms $\u_{i}$. 
In this case we have $\nabla_{\u}f_{i}(\x,\u) = P\tr x$, so that in each iteration of the algorithm, the update of the variables $\u_{i}^{t}$ takes the simple form
$$
	\u_{i}^{t} ~\gets~
	\frac{\u_{i}^{t-1} + \eta P\tr \x}{ \max\set{ \| \u_{i}^{t-1} + \eta P\tr \x \|_{2} , 1 } } ~.
$$
Specializing Theorem~\ref{thm:main-ogd} to the case of robust LPs, we obtain the following.
\begin{corollary} \label{cor:lp-ogd}
Algorithm~\ref{alg:ogd} returns an $\eps$-approximate robust solution to \eqref{eq:rlp} after at most $T = \O(\sig^{2} / \eps^{2})$ calls to the LP oracle, where $\sig^{2} = \|P\|_{F}^{2}$ is the maximal magnitude of the noise.
\end{corollary}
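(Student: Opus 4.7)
The plan is to instantiate Theorem~\ref{thm:main-ogd} for the robust LP formulation \eqref{eq:rlp}. First I would verify that the assumptions of the dual-subgradient meta-algorithm are satisfied: the constraint function $f_i(\x,\u_i) = (a_i + P\u_i)\tr\x - b_i$ is linear (hence concave) in $\u_i$, and the uncertainty set $\U = \{\u \in \reals^K : \norm{\u}_2 \le 1\}$ is the Euclidean unit ball, which is convex. Also, for any fixed $\ub \in \Ub$ the constraints are linear in $\x$, so a standard LP solver serves as the oracle $\oracle_\eps$.

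Next I would compute the parameters $D$ and $G$ that appear in Theorem~\ref{thm:main-ogd}. The $\ell_2$ diameter of the unit ball is $D = 2$. For the gradient bound, note that $\nabla_{\u} f_i(\x,\u) = P\tr\x$, so using the assumption that the feasible domain lies inside the Euclidean unit ball (so $\norm{\x}_2 \le 1$), one has
\begin{align*}
	\norm{\nabla_{\u}f_i(\x,\u)}_2
	~=~ \norm{P\tr\x}_2
	~\le~ \norm{P\tr}_{\mathrm{op}} \cdot \norm{\x}_2
	~\le~ \norm{P}_F
	~=~ \sig ~,
\end{align*}
so we may take $G = \sig$.

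Plugging $D = 2$ and $G = \sig$ into the iteration bound $T = O(G^2 D^2 / \eps^2)$ of Theorem~\ref{thm:main-ogd} yields $T = O(\sig^2/\eps^2)$ calls to the LP oracle. Finally, to match the $\eps$-approximation statement of the corollary (versus the $2\eps$ guarantee of Theorem~\ref{thm:main-ogd}), I would simply invoke Algorithm~\ref{alg:ogd} with target accuracy $\eps/2$, which only affects constants.

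There is no real obstacle here: once the assumptions are checked, the corollary is a direct specialization of the general theorem with the appropriate values of $D$ and $G$. The only mild subtlety is being careful that the bound $\norm{\x}_2 \le 1$ is legitimately available (ensured by the scaling remark preceding the robust LP formulation), which is what makes the replacement $G \le \norm{P}_F$ go through and gives the clean expression in terms of $\sig^2 = \norm{P}_F^2$.
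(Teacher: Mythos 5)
Your proof is correct and follows essentially the same route as the paper: bound $\norm{\nabla_{\u}f_i(\x,\u)}_2 = \norm{P\tr\x}_2 \le \norm{P}_F = \sig$ using $\norm{\x}_2 \le 1$, set $D=2$ and $G=\sig$, and invoke Theorem~\ref{thm:main-ogd}. Your extra remark about running the algorithm with target accuracy $\eps/2$ to convert the theorem's $2\eps$ guarantee into an $\eps$ guarantee is a fine (and slightly more careful) handling of a detail the paper absorbs into the $\O(\cdot)$ notation.
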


\begin{proof}
Note that for all $\u \in \U$ and $\|\x\|_{2} \le 1$,
$$
	\norm{\nabla_{\u}f_{i}(\x,\u)}_{2}^{2}
	~=~ \x\tr PP\tr \x
	~\le~ \|P\|_{F}^{2} \cdot \|\x\|_{2}^{2}
	~\le~ \sig^{2} ~.
$$ 
Setting $D = 2$ and $G = \sig$ in Theorem~\ref{thm:main-ogd}, we obtain the statement.
\end{proof}

\paragraph{Dual-Perturbation Algorithm.}

Since the constraints of the robust LP are linear in the uncertainties $\u_{i}$, we can also apply our FPL-based meta-algorithm to the problem \eqref{eq:rlp}.
Using the notations of Section~\ref{sub:main-fpl}, we have $\g_{i}(\x) = P\tr \x$.
Hence, the computation of the noise variables $\u_{i}^{t}$ can be done in closed-form, as follows:
$$
	\u_{i}^{t} 
	~\gets~ \max_{\|u\|_{2} \le 1} ~ \u\tr \big( P\tr \tsum_{\tau=1}^{t-1} \x^{\tau} + p_{t} \big)
	~=~ \frac{ P\tr \sum_{\tau=1}^{t-1} \x^{\tau} + p_{t} }
		{ \| P\tr \sum_{\tau=1}^{t-1} \x^{\tau} + p_{t} \|_{2} } ~.
$$
In this case, Theorem~\ref{thm:main-fpl} implies:
\begin{corollary} \label{cor:lp-fpl}
With high probability, Algorithm~\ref{alg:fpl} returns an $\eps$-approximate robust solution to \eqref{eq:rlp} after at most 
$
	T = \tO( K\sig^{2} / \eps^{2} )
$ 
calls to the LP oracle, where $\sig^{2} = \|P\|_{F}^{2}$ is the maximal magnitude of the noise.
\end{corollary}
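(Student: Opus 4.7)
The plan is to instantiate Theorem~\ref{thm:main-fpl} for the ellipsoidal robust LP \eqref{eq:rlp}. The work has three parts: (i) cast the constraints into the bilinear-in-$\u$ form required by Section~\ref{sub:main-fpl}; (ii) supply a valid pessimization oracle $M_{\eps}$; and (iii) compute the constants $D$, $F$, and $G$ explicitly and plug them into the bound $T = \tO((DG+F)F/\eps^{2})$.

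For (i), write $f_{i}(\x,\u) = (a_{i} + P\u)\tr \x - b_{i} = (P\tr \x)\cdot\u + (a_{i}\tr\x - b_{i})$, so that $\g_{i}(\x) = P\tr \x$ and $h_{i}(\x) = a_{i}\tr\x - b_{i}$, both linear in $\u$. For (ii), since $\U$ is the Euclidean unit ball in $\reals^{K}$, the maximizer of any linear functional $g\cdot \u$ over $\U$ has the closed-form expression $g/\|g\|_{2}$, so we may take $M_{\eps}$ to be exact ($\eps = 0$ in the pessimization-oracle assumption); this also justifies the closed-form update for $\u_{i}^{t}$ displayed just above the corollary.

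For (iii), a Cauchy--Schwarz bound gives the $\ell_{1}$ diameter $D \le 2\sqrt{K}$. Another application of Cauchy--Schwarz together with $\|\x\|_{2} \le 1$ yields
\[
\|\g_{i}(\x)\|_{1} ~=~ \|P\tr \x\|_{1} ~\le~ \sqrt{K}\,\|P\tr\x\|_{2} ~\le~ \sqrt{K}\,\|P\|_{F}\,\|\x\|_{2} ~\le~ \sqrt{K}\,\sig,
\]
so $G = \sqrt{K}\sig$. Finally, $|f_{i}(\x,\u)| \le \|P\tr\x\|_{2}\|\u\|_{2} + |a_{i}\tr\x - b_{i}| = O(\sig)$ (absorbing the $O(1)$ nominal term into $\sig$), giving $F = O(\sig)$. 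Substituting into Theorem~\ref{thm:main-fpl} with confidence parameter $\del$ set to a fixed inverse-polynomial value,
\[
T ~=~ \tO\!\left(\frac{(DG+F)\,F}{\eps^{2}}\right) ~=~ \tO\!\left(\frac{(K\sig + \sig)\,\sig}{\eps^{2}}\right) ~=~ \tO\!\left(\frac{K\sig^{2}}{\eps^{2}}\right),
\]
which is the claimed bound.

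The main thing to watch is the bookkeeping of norms: the FPL/approximate-FPL analysis of Section~\ref{sub:afpl} is inherently $\ell_{1}$/$\ell_{\infty}$, whereas the ellipsoidal uncertainty and the feasible domain are $\ell_{2}$. Every conversion costs a factor of $\sqrt{K}$, and one must verify that these losses compound to $K$ rather than $K^{3/2}$ in the product $DG$; here this is automatic because $F$ has no $K$-dependence while $D$ and $G$ each contribute a single $\sqrt{K}$, producing exactly $(\sqrt{K})^{2}=K$ in the dominant term.
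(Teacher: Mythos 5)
Your proposal is correct and follows essentially the same route as the paper: identify $\g_{i}(\x) = P\tr\x$, note the exact closed-form pessimization oracle over the Euclidean ball, bound $D \le 2\sqrt{K}$, $G \le \sqrt{K}\sig$, $F = O(\sig)$ via Cauchy--Schwarz, and substitute into Theorem~\ref{thm:main-fpl}. Your added remark tracking where each $\sqrt{K}$ factor enters (and your explicit handling of the nominal term $a_{i}\tr\x - b_{i}$ in $F$) is slightly more careful than the paper's own computation but changes nothing substantive.
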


\begin{proof}
Using the notations of Section~\ref{sub:main-fpl} with $\g_{i}(\x) = P\tr \x$, we have
\begin{align*}
	D &~=~ \max_{\u,\v \in \U} \| \u-\v \|_{1}
		~\le~ 2\sqrt{K} ~, \\
	G &~=~ \max_{\|\x\|_{2} \le 1} \| P\tr \x \|_{1} 
		~\le~ \sqrt{K} \max_{\|\x\|_{2} \le 1} \| P\tr \x \|_{2} 
		~\le~ \sqrt{K} \, \sig ~, \\
	F &~=~ \max_{\|\x\|_{2}, \|\u\|_{2} \le 1} | \x\tr P \u  | 
		~\le~ \|\x\|_{2} \cdot \|P\|_{F} \cdot \|\u\|_{2} 
		~\le~ \sig ~.	
\end{align*}
Making the substitutions into the guarantees in Theorem~\ref{thm:main-fpl} completes the proof.
\end{proof}

We see that the asymptotic performance of Algorithm~\ref{alg:fpl} is factor-$K$ worse than that of Algorithm~\ref{alg:ogd}, in the case of robust LP problems.

\subsection{Robust Quadratic Programming} \label{sub:qp}

A quadratically constrained quadratic program (QCQP) is given by
\begin{align*}
	\exists? &\quad \x \in \reals^{n} \\
	\text{s.t.} &\quad \|A_{i}\x\|_{2}^{2} - b_{i}\tr \x - c_{i} \le 0 ~, 
	\quad i=1,\ldots,m ~,
\end{align*}
with $A_{i} \in \reals^{n \times n}$, $b_{i} \in \reals^{n}$, $c_{i} \in \reals$.
As in the case of LPs, we assume that the feasible domain of the above program is inscribed in the Euclidean unit ball.
The robust counterpart of this optimization problem is a semidefinite program \citep{BENbook,BertsimasBC11}. 
Current state-of-the-art QP solvers can handle two to three orders of magnitude larger QPs than SDPs, motivating our results.
Indeed, our approach avoids reducing the robust program into an SDP and approximates it using a QP solver.

A robust QP with ellipsoidal uncertainties is given by%
\footnote{For simplicity, the uncertainties we consider here are only in the matrices $A_{i}$ (and not in the vectors $b_{i}$). In a similar, albeit more technical way we can also analyze our algorithm with general ellipsoidal uncertainties.}%
\begin{align} \label{eq:rqp}
	\exists? &\quad \x \in \reals^{n} \nonumber\\
	\text{s.t.} &\quad \Big\|(A_{i} + \tsum_{k=1}^{K} u_{i,k} P_{k}) \x \Big\|_{2}^{2} - b_{i}\tr \x - c_{i} \le 0 ~, 
	\quad \forall ~ \u_{i} \in \U, \quad i=1,\ldots,m ~,
\end{align}
where $P_{k} \in \reals^{n \times n}$ are fixed matrices and $\U = \{ \u \in \reals^{K} ~:~ \norm{\u}_{2} \le 1 \}$.
Here $u_{i,k}$ denotes the $k$'th entry of the noise vector~$u_{i} \in \U$.

Notice that Algorithm~\ref{alg:ogd} does not apply to formulation \eqref{eq:rqp}, as the constraints are certainly not concave with respect to the noise terms $\u_{i}$ (in fact, they are \emph{convex} in $\u_{i}$, as we show below).
This motivates the need for our FPL-based meta-algorithm.

\paragraph{Dual-Perturbation Algorithm.}

We now show that the problem \eqref{eq:rqp} falls in the scope of Section~\ref{sub:main-fpl}, and the assumptions required there hold for this program. 
We let $\sig^{2} = \sum_{k=1}^{K} \|P_{k}\|_{F}^{2}$ denote the total magnitude of the admissible noise, 
and assume that the Frobenius norms of the nominal matrices $A_{1},\ldots,A_{m}$ are upper bounded by $\rho$.

The following lemma shows that the $i$'th constraint is in fact a convex quadratic in $\u_{i}$.

\begin{lemma} \label{lem:quadform}
The $i$'th constraint in~\eqref{eq:rqp} can be written as
\begin{align*}
	\u_{i}\tr Q_{\x} \u_{i} + 2 r_{\x}\tr \u_{i} + s_{\x} ~\le~ 0 ~,
\end{align*}
where $Q_{\x} \in \reals^{K \times K}$, $r_{\x} \in \reals^{K}$ and $s_{\x} \in \reals$ does not depend on $\u_{i}$.
The matrix $Q_{\x}$ is a positive semidefinite with $\norm{Q_{\x}}_{F} \le \sig^{2}$, and $\norm{r_{\x}}_{2} \le \sig \rho$.
\end{lemma}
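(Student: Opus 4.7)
The plan is to expand the squared norm in the $i$'th constraint directly as a polynomial in $\u_i$, read off $Q_\x$, $r_\x$, $s_\x$ from the coefficients, and then control $\norm{Q_\x}_F$ and $\norm{r_\x}_2$ via Cauchy--Schwarz, together with the hypotheses $\norm{\x}_2\le 1$, $\sum_k \norm{P_k}_F^2 = \sig^2$, and $\norm{A_i}_F \le \rho$.

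Concretely, writing $M(\u_i) := A_i + \sum_{k=1}^{K} u_{i,k} P_k$, I would expand
\[
\norm{M(\u_i)\,\x}_2^2
= \x\tr A_i\tr A_i \x + 2\sum_{k=1}^{K} u_{i,k}\,\ip{A_i\x}{P_k\x} + \sum_{k,j=1}^{K} u_{i,k}u_{i,j}\,\ip{P_k\x}{P_j\x}.
\]
This dictates the definitions $(Q_\x)_{k,j} := \ip{P_k\x}{P_j\x}$, $(r_\x)_k := \ip{A_i\x}{P_k\x}$, and $s_\x := \x\tr A_i\tr A_i\x - b_i\tr\x - c_i$, all manifestly independent of $\u_i$. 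Positive semidefiniteness of $Q_\x$ then comes for free: letting $V_\x$ denote the $n\times K$ matrix whose $k$'th column is $P_k\x$, we have $Q_\x = V_\x\tr V_\x \succeq 0$.

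For the norm bounds I would apply Cauchy--Schwarz entrywise. For $Q_\x$, each entry satisfies $(Q_\x)_{k,j}^{2} \le \norm{P_k\x}_2^2\,\norm{P_j\x}_2^2$, whence $\norm{Q_\x}_F^2 \le \bigl(\sum_k \norm{P_k\x}_2^2\bigr)^2 \le \bigl(\sum_k \norm{P_k}_F^2\bigr)^2 = \sig^4$, where the inner step uses submultiplicativity of the Frobenius norm together with $\norm{\x}_2\le 1$. An analogous application gives $\norm{r_\x}_2^2 = \sum_k \ip{A_i\x}{P_k\x}^{2} \le \norm{A_i\x}_2^2 \sum_k \norm{P_k\x}_2^2 \le \rho^2 \sig^2$.

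I do not foresee a real obstacle: the only care needed is to apply Cauchy--Schwarz in the right way for the Frobenius bound on $Q_\x$, so as to avoid leaking an extraneous $\sqrt{K}$ factor --- the pairwise inner-product estimate above, rather than a cruder operator-norm argument, is precisely what delivers the tight $\sig^2$ rate that the lemma claims and that the convergence analysis of Algorithm~\ref{alg:fpl} will eventually consume.
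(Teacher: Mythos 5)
Your proposal is correct and follows essentially the same route as the paper: expand the squared norm, set $Q_\x = Y\tr Y$, $r_\x = Y\tr(A_i\x)$ (with $Y$ the matrix whose columns are $P_k\x$), and bound $\norm{Q_\x}_F \le \norm{Y}_F^2 \le \sig^2$ and $\norm{r_\x}_2 \le \norm{Y}_F\norm{A_i\x}_2 \le \sig\rho$ using $\norm{P_k\x}_2 \le \norm{P_k}_F$ for $\norm{\x}_2 \le 1$. Your entrywise Cauchy--Schwarz estimates are just an unpacked form of the paper's inequalities $\norm{Y\tr Y}_F \le \norm{Y}_F^2$ and $\norm{Y\tr \y_0}_2 \le \norm{Y}_F\norm{\y_0}_2$, so there is no substantive difference.
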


\begin{proof}
Define $\y_{0} = A_{i}x$ and $\y_{k} = P_{k} x$ for $k=1,\ldots,K$.
We have
\begin{align*}
	\Big\|(A_{i} + \tsum_{k=1}^{K} \u_{i,k} P_{k}) \x \Big\|_{2}^{2}
	&~=~ x\tr \big( 
		A_{i}\tr A_{i} + 2\tsum_{k=1}^{K} \u_{i,k} A_{i}\tr P_{k} 
		+ \tsum_{k,l=1}^{K} \u_{i,k} \u_{i,l} P_{k}\tr P_{l} 
		\big) x \\
	&~=~ \y_{0}\tr \y_{0} + 2\tsum_{k=1}^{K} \u_{i,k} \, \y_{0}\tr \y_{k}
		+ \tsum_{k,l=1}^{K} \u_{i,k} \u_{i,l} \, \y_{k}\tr \y_{l} \\
	&~=~ \y_{0}\tr \y_{0} + 2 (Y\tr \y_{0})\tr \u_{i} + \u_{i}\tr Y\tr Y \u_{i} ~,
\end{align*}
where $Y \in \reals^{K \times K}$ is a matrix whose columns are $\y_{1},\ldots,\y_{K}$.
We see that the first claim holds for $Q_{\x} = Y\tr Y$, $r_{\x} = Y\tr \y_{0}$ and $s_{\x} = \y_{0}\tr \y_{0} - b_{i}\tr \x - c_{i}$, all of which are independent of $\u_{i}$.

It is left to bound the coefficients in the above quadratic form. 
Note that $\|\y_{k}\|_{2} \le \|P_{k}\|_{F} \cdot \|\x\|_{2} \le \|P_{k}\|_{F}$ for all $\x$ with $\|\x\|_{2} \le 1$, so that
\begin{align*}
	\|Y\|_{F}^{2}
	~=~ \sum_{k=1}^{K} \|\y_{k}\|_{2}^{2}
	~\le~ \sum_{k=1}^{K} \|P_{k}\|_{F}^{2}
	~=~ \sig^{2} ~.
\end{align*}
Hence,
\begin{align*}
	\| Q_{\x} \|_{F}
	~=~ \|Y\tr Y\|_{F}
	~\le~ \|Y\|_{F}^{2} 
	~\le~ \sig^{2}
\end{align*}
and
\begin{align*}
	\| r_{\x} \|_{2}
	~=~ \| Y\tr \y_{0} \|_{2}
	~\le~ \| Y \|_{F} \cdot \| \y_{0} \|_{2}
	~\le~ \| Y \|_{F} \cdot \| A_{i} \|_{F}
	~\le~ \sig \rho ~,
\end{align*}
which proves the second claim.
\end{proof}

The above lemma demonstrates the well-known fact that the problem of finding the worst-case noise in robust QP with ellipsoidal uncertainty is a maximization of a convex quadratic over the unit ball (see \citealt{BENbook}), a mathematical program known as the \emph{trust region subproblem}.
For this well-studied problem, fast approximation algorithms are available \citep{more1983computing,rendl1997semidefinite} that are able to avoid solving an SDP (see also the recent linear-time approximation algorithm of \citealt{hazan2014linear}).

Finally, we compute the number of iterations required for Algorithm~\ref{alg:fpl} to converge.
\begin{corollary} \label{cor:qp-fpl}
With high probability, Algorithm~\ref{alg:fpl} returns an $\eps$-approximate robust solution to \eqref{eq:rqp} after at most 
$
	T = \tO(
		K^{2}\sig^{2} \rho^{2} /\eps^{2}
	)
$
calls to the QP oracle, where $\sig^{2} = \sum_{k=1}^{K} \|P_{k}\|_{F}^{2}$ is the total magnitude of the noise and $\rho \ge \max_{i} \|A_{i}\|_{F}$ is an upper bound over the norms of the nominal matrices.
\end{corollary}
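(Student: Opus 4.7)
The plan is to recast \eqref{eq:rqp} in the linear-in-uncertainty format required by Section~\ref{sub:main-fpl} via a standard lifting, and then apply Theorem~\ref{thm:main-fpl}. Lemma~\ref{lem:quadform} already writes the $i$-th constraint as $\u_i\tr Q_\x \u_i + 2 r_\x\tr \u_i + s_\x \le 0$, which is \emph{linear} in the lifted variable $\bv_i = (\u_i,\, \mathrm{vec}(\u_i \u_i\tr)) \in \reals^{K+K^2}$. Introducing the lifted uncertainty set $\tilde{\U} = \{(\u,\, \mathrm{vec}(\u\u\tr)) : \u \in \U\}$ and the coefficient $\tilde{\g}_i(\x) = (2r_\x,\, \mathrm{vec}(Q_\x))$, the constraint reads $\tilde{\g}_i(\x) \cdot \bv_i + s_\x \le 0$. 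The lifted set is non-convex, but Section~\ref{sub:main-fpl} permits this as long as a pessimization oracle over it is supplied.

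Second, I would exhibit an approximate pessimization oracle $M_\eps$ for $\tilde{\U}$. Given any input decomposable as $(\ell,\, \mathrm{vec}(\tilde Q)) \in \reals^K \times \reals^{K\times K}$, maximizing over $\bv \in \tilde{\U}$ reduces to $\max_{\|\u\|_2 \le 1} \bigl( \ell\tr \u + \u\tr \tilde Q \u \bigr)$, which is exactly the classical trust region subproblem. Fast $\eps$-approximate solvers for this problem exist; the linear-time algorithm of~\cite{hazan2014linear} is a convenient choice.

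Third, I would bound the parameters $D$, $G$, $F$ appearing in Theorem~\ref{thm:main-fpl}. For $D$: since $\|\u\|_1 \le \sqrt{K}$ and $\|\mathrm{vec}(\u\u\tr)\|_1 = \|\u\|_1^2 \le K$ for all $\u \in \U$, the $\ell_1$ diameter of $\tilde{\U}$ satisfies $D = O(K)$. For $G$: combining Lemma~\ref{lem:quadform} with the inequality $\|M\|_1 \le K\|M\|_F$ for $K\times K$ matrices gives $\|\tilde{\g}_i(\x)\|_1 \le 2\sqrt{K}\,\sig\rho + K\sig^2$, so $G = O(K\sig^2 + \sqrt{K}\sig\rho)$. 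For $F$: $|\u\tr Q_\x\u| \le \sig^2$, $|2 r_\x\tr \u| \le 2\sig\rho$, and $|s_\x| = O(\rho^2)$ under the natural scaling (feasible set inscribed in the unit ball and bounded $b_i, c_i$), giving $F = O(\rho^2)$.

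Substituting into $T = \tO((DG+F)F/\eps^2)$ from Theorem~\ref{thm:main-fpl} yields $T = \tO(K^2\sig^2\rho^2/\eps^2)$ as the leading term, with lower-order contributions of order $K^{3/2}\sig\rho^3$ and $\rho^4$ absorbed. The main technical point is the second step: although the lift produces a non-convex uncertainty set---rendering the dual-subgradient method of Section~\ref{sub:main-ogd} inapplicable because the constraints are convex (not concave) in $\u_i$---the resulting pessimization problem coincides exactly with the trust region subproblem, one of the few non-convex quadratic programs known to admit efficient approximation. This is precisely what makes the dual-perturbation meta-algorithm necessary, and effective, for robust QPs.
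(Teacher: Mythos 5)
Your proposal follows essentially the same route as the paper: lift the quadratic dependence on $\u_i$ into a linear one over the non-convex set $\{(\u,\u\u\tr):\u\in\U\}$, observe that the pessimization oracle is exactly the trust region subproblem, bound $D$, $G$, $F$ using Lemma~\ref{lem:quadform}, and plug into Theorem~\ref{thm:main-fpl}. Your bounds on $D$ and $G$ are correct (in fact slightly tighter than the paper's, which bounds the $\ell_1$ norms by treating $(Q_\x,2r_\x)$ as a single $(K^2+K)$-dimensional vector and loses a bit). The one slip is in $F$: you include the affine offset $s_\x$ and arrive at $F=O(\rho^2)$, but the regret analysis only ever uses $F$ as a bound on the \emph{linear} part $|\g_i(\x)\cdot\u_i'|$ --- it enters Lemma~\ref{lem:azuma} through the martingale increments $|Z_t|\le 2F$ and through the reward bound $R$ of Theorem~\ref{thm:afpl}, while $h_i(\x)=s_\x$ cancels in the regret difference and never appears. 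The correct value is therefore $F=O(\sig^2+\sig\rho)$, as in the paper. With your larger $F$, the product $(DG+F)F$ acquires terms of order $K^{3/2}\sig\rho^3$ and $\rho^4$ that are \emph{not} dominated by $K^2\sig^2\rho^2$ unless one additionally assumes $\rho=O(\sqrt{K}\,\sig)$, so your final step ``absorbing lower-order contributions'' does not go through as stated; replacing $F$ by $O(\sig^2+\sig\rho)$ fixes this and recovers the claimed $T=\tO(K^2\sig^2\rho^2/\eps^2)$ (under the implicit normalization $\sig=O(\rho)$ that the paper's own constants also rely on).
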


\begin{proof}
According to Lemma~\ref{lem:quadform}, the $i$'th constraint in~\eqref{eq:rqp} can be written in the linear form $\g_{i}(\x) \cdot \u_{i}' + h_{i}(\x)$ by setting $\g_{i}(\x) = (Q_{\x},2r_{\x})$ and $h_{i}(\x) = s_{\x}$ and $\u_{i}' = (\u_{i} \u_{i}\tr , \u_{i}) \in \reals^{K^{2}+K}$, 
with $Q_{\x} \in \reals^{K \times K}$, $r_{\x} \in \reals^{K}$, $s_{\x} \in \reals$ and $\norm{Q_{\x}}_{F} \le \sig^{2}$, and $\norm{r_{\x}}_{2} \le \sig \rho$.
That is, for the analysis only, we imagine that we work over a transformed, non-convex uncertainty set,
\begin{align*}
	\U' ~=~ \big\{ (\u \u\tr , \u) ~:~ \u \in \U \big\}
	~\subseteq~ \reals^{K^{2}+K} ~.
\end{align*}
(Recall that the convergence properties of Algorithm~\ref{alg:fpl} do not require the convexity of the uncertainty set.)
Notice that maximizing the linear function $\g_{i}(\x) \cdot \u_{i}'$ with respect to $\u_{i}' \in \U'$ is equivalent to maximizing the function $f_{i}(\x,\u_{i})$ over $\u_{i} \in \U$, as established by the oracle to the non-robust problem.
With the above definitions and the notations of Section~\ref{sub:main-fpl}, we have for all $\u' \in \U'$ that
\begin{align*}
	\| \u' \|_{2}
	~\le~ \| \u\u\tr \|_{F} + \| \u \|_{2}
	~\le~ \|\u\|_{2}^{2} + \| \u \|_{2}
	~\le~ 2 ~,
\end{align*}
and for all $\x$, $\|\x\|_{2} \le 1$ it holds that
\begin{align*}
	\norm{\g_{i}(\x)}_{2}
	~\le~ \|Q_{\x}\|_{F} + 2\|r_{\x}\|_{2}
	~\le~ \sig^{2} + 2\sig\rho ~.
\end{align*}
Hence, for all $\u'$ and $\x$,
\begin{align*}
	\| \u' \|_{1}
	&~\le~ \sqrt{K^{2}+K} \, \| \u' \|_{2}
	~\le~ 4K ~, \\
	\norm{\g_{i}(\x)}_{1}
	&~\le~ 2K \norm{\g_{i}(\x)}_{2}
	~\le~ 2K (\sig^{2} + 2\sig\rho) ~, \\
	| \g_{i}(\x) \cdot \u' |
	&~\le~ \|\g_{i}(\x)\|_{2} \cdot \|\u'\|_{2}
	~\le~ 2\sig^{2} + 4\sig\rho ~.
\end{align*}
Hence, we may set 
\begin{align*}
	D &= 8K ~, \\
	G &= 2K (\sig^{2} + 2\sig\rho) ~, \\
	F &= 2\sig^{2} + 4\sig\rho ~.
\end{align*}
Theorem~\ref{thm:main-fpl} with the above quantities now implies that Algorithm~\ref{alg:fpl} needs at most 
$
	T = \tO(
		K^{2}\sig^{2} \rho^{2} /\eps^{2}
	)
$
iterations for $\eps$-approximating the problem~\eqref{eq:rqp}.
\end{proof}

\subsection{Robust Semidefinite Programming} \label{sub:sdp}

A semidefinite program (SDP) is given by
\begin{align*}
	\exists? &\quad X \in \mathcal{S}_{+}^{n} \\
	\text{s.t.} &\quad A_{i} \bullet X - b_{i} \le 0 ~, 
	\quad i=1,\ldots,m ~,
\end{align*}
where $\mathcal{S}_{+}^{n} = \set{X \in \reals^{n \times n} ~:~ X \succeq 0}$ is the cone of $n \times n$ positive semidefinite matrices, $A_{i} \in \reals^{n \times n}$ are nominal parameter matrices, and $A \bullet X = \trace(A\tr X)$ denotes the dot-product of the matrices $A$ and $X$.
Again, we assume that the feasible domain of the SDP is inscribed in the Euclidean unit ball (defined by the Frobenius matrix norm).

The robust counterpart of an SDP program is, in general, NP-hard even with simple ellipsoidal uncertainties \citep{Ben-TalN02,BENbook}.
Nevertheless, using our framework we are able to approximate robust SDP programs to within an arbitrary precision, as we now describe.

A robust SDP program with ellipsoidal uncertainties takes the following form: 
\begin{align} \label{eq:rsdp}
	\exists? &\quad X \in \mathcal{S}_{+}^{n} \nonumber\\
	\text{s.t.} &\quad \Big( A_{i} + \tsum_{k=1}^{K} u_{i,k} P_{k} \Big) \bullet X - b_{i} \le 0 ~, 
	\quad \forall ~ \u_{i} \in \U, \quad i=1,\ldots,m ~,
\end{align}
where $P_{k} \in \reals^{n \times n}$ are fixed matrices and $\U = \{ \u \in \reals^{K} ~:~ \norm{\u}_{2} \le 1 \}$.

\paragraph{Dual-Subgradient Algorithm.}

Similarly to robust LPs, our OGD-based meta-algorithm can be applied to the robust SDP program \eqref{eq:rsdp} as the constraints are linear with respect to the noise terms $\u_{i}$.
In the present case we have $\nabla_{\u}f_{i}(X,\u) = (P_{1} \bullet X, \ldots, P_{K} \bullet X)$, so that the update of the noise variables $\u_{i}^{t}$ takes the simple form
$$
	\forall ~ k=1,\ldots,K ~,
	\qquad
	\v_{i,k}^{t} ~\gets~
	\u_{i,k}^{t-1} + \eta (P_{k} \bullet X) ~,
	\qquad
	\u_{i,k}^{t} ~\gets~
	\frac{\v_{i,k}^{t}}{\max\{ \|\v_{i}\|_{2} , 1\} } ~.
$$
For the resulting algorithm, we have the following.
\begin{corollary} \label{cor:sdp-ogd}
Algorithm~\ref{alg:ogd} terminates with an $\eps$-approximate solution to \eqref{eq:rsdp} after no more than
$
	T = \O( \sig^{2}/\eps^{2} )
$
calls to the SDP oracle, where $\sig^{2} = \sum_{k=1}^{K} \|P_{k}\|_{F}^{2}$ is the total magnitude of the allowed noise.
\end{corollary}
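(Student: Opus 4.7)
The plan is to instantiate Theorem~\ref{thm:main-ogd} for the robust SDP program \eqref{eq:rsdp}, exactly paralleling the proof of Corollary~\ref{cor:lp-ogd}. Since Algorithm~\ref{alg:ogd} delivers an $\eps$-approximate solution after $O(G^2 D^2/\eps^2)$ oracle calls, all that is needed is to verify the two structural assumptions of Section~\ref{sub:main-ogd} and to supply quantitative bounds on $D$ (the $\ell_2$-diameter of the uncertainty set $\U$) and on $G$ (a uniform upper bound on $\|\nabla_\u f_i(X,\u)\|_2$ over $X \in \D$ and $\u \in \U$).

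First I would check the two prerequisites: the function $f_i(X,\u) = \bigl(A_i + \sum_k u_{i,k} P_k\bigr)\bullet X - b_i$ is affine in $\u$ (hence trivially concave), and $\U = \{\u \in \reals^K : \|\u\|_2 \le 1\}$ is convex. Next, since $\U$ is the Euclidean unit ball in $\reals^K$, we immediately get $D = 2$.

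The only quantitative step is to bound the gradient norm. Differentiating the affine expression gives $\nabla_\u f_i(X,\u) = (P_1 \bullet X, \ldots, P_K \bullet X)$, which is independent of $\u$. For any feasible $X$ with $\|X\|_F \le 1$ I would apply Cauchy--Schwarz on the Frobenius inner product to each coordinate, $(P_k \bullet X)^2 \le \|P_k\|_F^2 \cdot \|X\|_F^2 \le \|P_k\|_F^2$, and then sum across $k = 1,\ldots,K$ to obtain
\[
\|\nabla_\u f_i(X,\u)\|_2^2 ~=~ \sum_{k=1}^K (P_k \bullet X)^2 ~\le~ \sum_{k=1}^K \|P_k\|_F^2 ~=~ \sig^2 ~.
\]
So I would set $G = \sig$.

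Substituting $D = 2$ and $G = \sig$ into the $T = O(G^2 D^2/\eps^2)$ iteration bound of Theorem~\ref{thm:main-ogd} yields the claimed $T = O(\sig^2/\eps^2)$. There is no genuine obstacle here: the only thing to be careful about is the norm accounting, namely that the right Cauchy--Schwarz bound to use is the Frobenius one (so that the $P_k$ appear in Frobenius norm, matching the definition of $\sig^2$), and that the feasible primal set is inscribed in the Frobenius unit ball so that $\|X\|_F \le 1$ can be used. With those two observations in place, the corollary follows as a direct specialization of Theorem~\ref{thm:main-ogd}.
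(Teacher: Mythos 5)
Your proposal is correct and matches the paper's own proof: both instantiate Theorem~\ref{thm:main-ogd} with $D=2$ (unit-ball uncertainty set) and $G=\sig$, obtained via the coordinatewise Cauchy--Schwarz bound $|P_k \bullet X|^2 \le \|P_k\|_F^2 \|X\|_F^2$ summed over $k$. No gaps.
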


\begin{proof}
By Cauchy-Schwarz, for all $\u \in \U$ and $\|\x\|_{2} \le 1$,
$$
	\norm{\nabla_{\u}f_{i}(\x,\u)}_{2}^{2}
	~=~ \sum_{k=1}^{K} |P_{k} \bullet X|^{2}
	~\le~ \sum_{k=1}^{K} \|P_{k}\|_{F}^{2} \cdot \|X\|_{F}^{2}
	~\le~ \sum_{k=1}^{K} \|P_{k}\|_{F}^{2}
	~=~ \sig^{2} ~.
$$ 
Therefore, we may take $D = 2$ and $G = \sig$ in the bound of Theorem~\ref{thm:main-ogd}, giving our claim.
\end{proof}

Finally, we note that Algorithm~\ref{alg:fpl} also applies to robust SDPs, but gives a guarantee worse by a factor of $K^{2}$.

\section{Conclusion} \label{sec:conc}


In this paper we considered using online learning approaches for effectively solving robust optimization problems without transforming the problem to a different, more complex, class of problems. 
We showed that if the original problem is convex and comes equipped with an oracle capable of approximating it, then we can solve the robust problem approximately by employing an online learning approach that invokes the oracle a polynomial number of times.
Essentially, our approach is applicable to any robust optimization problem for which we can efficiently approximate the worst-case noise for any given feasible solution, and is particularly efficient when the latter task can be accomplished via subgradient methods. 

Our approach opens up avenues for solving large-scale robust optimization problems that are more common in data analysis and machine learning. 
The key observation is that the number of iterations of the online learning algorithms is 
{\em independent} of the dimension of the problem. This means that as long as the original problem is solvable efficiently (e.g., support vector machines) the robust problem does not become much more difficult if the accuracy of the solution can be compromised. 

Our approach can be used to solve other RO problems of interest. For example, solving robust multi-stage decision problems such as Markov decision processes \citep{Puterman94} is of interest; see \cite{Nilim05} for discussion of robust Markov decision processes. Standard (non-robust) Markov decision processes are solvable using linear programming. However, their robust counterpart is in general not amenable to linear programming and a dynamic programming approach is needed to solve the stochastic game between the decision maker and Nature. 
This approach does not seem to scale up to large problems where approximate dynamic programming is needed. Using an online approach as we suggested may prove very useful since solving the original problem seems easy (solving a linear program) and finding the worst-case noise is also not too difficult, depending on the noise model. We leave the important case of multi-stage problems for future research.

Finally, it would be interesting to adapt our approach to robust combinatorial optimization, where few disciplined robust optimization methods are available.
While our methods assume the original problem to be convex, our main interaction with the problem is through a black-box oracle (that may be available for non-convex problems), so it seems that the convexity requirement might be relaxed in certain cases of interest.

\bibliographystyle{abbrvnat}
\bibliography{robust}

\end{document}